\newcommand{\e}{\mathrm{e}}
\newtheorem{theorem}{Theorem}
\newtheorem{proposition}{Proposition}
\definecolor{wildstrawberry}{rgb}{1.0, 0.26, 0.64}
\definecolor{ao(english)}{rgb}{0.0, 0.5, 0.0}
\begin{document}

\title{Generalized fraction rules for monotonicity with higher antiderivatives and derivatives}

\author[1]{Vasiliki Bitsouni\,\orcidlink{0000-0002-0684-0583}\thanks{\texttt{vbitsouni@math.uoa.gr}}}

\author[1,2]{Nikolaos Gialelis\,\orcidlink{0000-0002-6465-7242}\thanks{\texttt{ngialelis@math.uoa.gr}}}

\author[3]{Dan \c{S}tefan Marinescu\,\orcidlink{0000-0001-9066-2558}\thanks{\texttt{marinescuds@gmail.com}}}

\affil[1]{Department of Mathematics, National and Kapodistrian University of Athens, Panepistimioupolis, GR-15784 Athens, Greece}

\affil[2]{School of Medicine, National and Kapodistrian University of Athens,\newline GR-11527 Athens, Greece}

\affil[3]{National College \textquote{Iancu de Hunedoara}, Hunedoara, Romania}

\date{}

\maketitle

\begin{abstract}
\noindent
We first introduce the generic versions of the fraction rules for monotonicity, i.e. the one that involves integrals known as the Gromov theorem and the other that involves derivatives known as L'H\^opital rule for monotonicity, which we then extend to high order antiderivatives and derivatives, respectively. 
\end{abstract}

\noindent
\textbf{Keywords:} fraction rules for monotonicity, the Gromov theorem, the L'H\^opital rule for monotonicity, high order antiderivative, high order mean, Cauchy formula of repeated integration, high order derivative, Taylor polynomial, Taylor remainder 

\noindent
\textbf{MSC2020-Mathematics Subject Classification System:} 26A24, 26A36, 26A48, 26D10



\section{Introduction} 

Roughly speaking, the application of either the integral or the differential operation to both the numerator and the denominator of a fraction, preserves the monotonicity of the fraction. The integral case of such fact is known as \textit{the Gromov theorem} (see, e.g., \cite{chavel2006riemannian, estrada2017hopital}), while the differential case is called \textit{the L'H\^opital rule for monotonicity} (see, e.g., \cite{pinelis2002hospital, pinelis2001hospital, anderson2006monotonicity, wu2009generalization, estrada2017hopital}). The Gromov theorem first appeared in \cite{cheeger1982finite}, i.e. about a decade \textit{before} the introduction of the L'H\^opital rule for monotonicity in \cite{anderson1993inequalities}. 

These results have been proven to be quite useful analytical tools with many applications to a plethora of mathematical areas, such as differential geometry (see, e.g., \cite{cheeger1982finite, chavel2006riemannian}), quasiconformal theory (see, e.g., \cite{anderson1993inequalities}), information theory (see, e.g., \cite{pinelis2002hospital}), probability theory (see, e.g., \cite{pinelis2002hospitalpr}), approximation theory (see, e.g., \cite{pinelis2002monotonicity}), theory of special functions (see, e.g., \cite{anderson2006monotonicity, wu2009generalization, chen2022monotonicity}) and theory of analytic functions (see, e.g., \cite{estrada2017hopital}). 

Below follow the most generic versions of these fraction rules for monotonicity, for the statement of which we remind that a real function, defined in an interval of the extended real line, $\left[-\infty,\infty\right]$, is locally characterized by a property when it is characterized by that property in every compact subinterval of its domain (we remind that an unbounded interval of the form $\left[-\infty,\infty\right]$, $\left[-\infty,a\right]$ or $\left[a,\infty\right]$, for some $a\in\mathbb{R}$, is compact).

\begin{theorem}[the Gromov theorem]
\label{r0}
Consider
\begin{enumerate}
\item an interval $I\subseteq\left[-\infty,\infty\right]$, 
\item a point $c\in I$ and 
\item two functions $f,g\,\colon\,I\to\mathbb{R}$, such that 
\begin{enumerate}[label=\roman*.]
\item $f$ and $g$ are both locally Lebesgue integrable and 
\item $g$ preserves Lebesgue-almost everywhere a non zero sign.
\end{enumerate}
\end{enumerate}
If $\frac{f}{g}\colon\,I\to\left[-\infty,\infty\right]$ is Lebesgue-almost everywhere (strictly) monotonic, then $\frac{\int\limits_{c}^\cdot{f{\left(t\right)}\mathrm{d}t}}{\int\limits_{c}^\cdot{g{\left(t\right)}\mathrm{d}t}}\colon\,I\setminus\left\{c\right\}\to\mathbb{R}$ is (strictly) monotonic of the same (strict) monotonicity. 
\end{theorem}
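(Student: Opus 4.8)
The plan is to set $F := \int_c^{\cdot} f$ and $G := \int_c^{\cdot} g$, to verify that the quotient $Q := F/G$ is locally absolutely continuous on $I \setminus \{c\}$, and then to control the sign of $Q'$ through the assumed monotonicity of $f/g$. First I would reduce to the case $g > 0$ almost everywhere: replacing $g$ by $-g$ simultaneously flips the monotonicity type of $f/g$ and of $F/G$, so the conclusion is unaffected; I would also assume without loss of generality that $f/g$ is increasing, the decreasing case following by replacing $f$ with $-f$. Under $g>0$ a.e., the function $G$ is continuous with $G(x)>0$ for $x>c$ and $G(x)<0$ for $x<c$, so $G$ is nonzero and keeps a fixed sign on each connected component of $I\setminus\{c\}$. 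Since $F$ and $G$ are locally absolutely continuous and $1/G$ is locally absolutely continuous away from $c$ (being bounded away from $0$ on compact subintervals), $Q$ is locally absolutely continuous on each component, with $Q' = (fG - Fg)/G^2$ almost everywhere.

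The crux is the sign of the numerator $fG - Fg$. Writing $h$ for the monotonic representative of $f/g$, so that $f = hg$ a.e., I would record the identity
\begin{equation*}
f(x)G(x) - F(x)g(x) = \int_c^x g(x)\,g(t)\,\left[h(x) - h(t)\right]\,\mathrm{d}t,
\end{equation*}
which follows by substituting $f = hg$ inside both $F$ and the pointwise factor $f(x)$. For $x > c$ and $t \in (c,x)$ one has $h(t) \le h(x)$, while $g(x), g(t) > 0$, so the integrand is nonnegative and $fG - Fg \ge 0$; for $x < c$ the inequality $h(t) \ge h(x)$ on $(x,c)$ combines with the reversed orientation of the integral to give the same sign. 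Hence $Q' \ge 0$ almost everywhere on each component. In the strict case, strict monotonicity of $h$ makes the integrand strictly positive for almost every $t$, whence $Q' > 0$ almost everywhere.

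From local absolute continuity, $Q(v) - Q(u) = \int_u^v Q'$ for $u<v$ in the same component, so $Q$ is (strictly) increasing on each component. It then remains to glue across $c$ when $c$ is interior: for $x_1 < c < x_2$ I would write $Q(x_1) = \bigl(\int_{x_1}^c hg\bigr)/\bigl(\int_{x_1}^c g\bigr)$ and $Q(x_2) = \bigl(\int_c^{x_2} hg\bigr)/\bigl(\int_c^{x_2} g\bigr)$ as $g$-weighted averages of $h$ over $[x_1,c]$ and $[c,x_2]$; monotonicity of $h$ bounds the first from above by $\lim_{t\to c^-} h(t)$ and the second from below by $\lim_{t\to c^+} h(t)$, and these one-sided limits satisfy $h(c^-) \le h(c^+)$, giving $Q(x_1) \le Q(x_2)$. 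The strict version across $c$ then follows by inserting intermediate points $x_1 < x_1' < c < x_2' < x_2$ and chaining the strict inequalities on each component with the non-strict one across.

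I expect the main obstacle to be twofold: ensuring that $Q$ really is locally absolutely continuous, so that the a.e. sign of $Q'$ transfers to genuine, and in particular everywhere pointwise, monotonicity — this is where having only a.e. information on $f/g$ must be handled by passing to the monotonic representative $h$ — and carrying out the gluing at $c$ cleanly, since $I\setminus\{c\}$ may be disconnected and the two one-sided quotients must be compared through the behaviour of $h$ at $c$. Finally, I would address the extended-real-line technicalities permitted by the hypotheses (endpoints $\pm\infty$, or $c$ at an endpoint), where the component arguments localize on compact subintervals and the boundary cases reduce to taking monotone limits.
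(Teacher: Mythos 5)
Your proposal is correct, but it takes a genuinely different route from the paper's. The paper (which proves this statement as the base case $n=1$ of its Theorem 5) reparametrizes by the denominator: writing $F:=\int_c^\cdot f$ and $G:=\int_c^\cdot g$, it uses the change-of-variables formula to show that $h:=F\circ G^{-1}$ is an antiderivative of $\left(f/g\right)\circ G^{-1}$, which is Lebesgue-a.e.\ (strictly) increasing, so that $h$ is (strictly) convex; since $h(0)=0$, the Galvani lemma makes $y\mapsto h(y)/y$ (strictly) increasing, and composing back with the strictly increasing $G$ gives the claim. You instead differentiate the quotient $Q=F/G$ directly and control the sign of the Wronskian-type numerator via the Chebyshev-style identity $f(x)G(x)-F(x)g(x)=\int_c^x g(x)g(t)\left[h(x)-h(t)\right]\mathrm{d}t$, where $h$ is the monotone representative of $f/g$. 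The trade-off is clear: the convexity route gets the comparison across $c$ for free, because Galvani's lemma compares slopes through $0$ from both sides of the interval $G(I)$ at once, whereas you must glue the two components of $I\setminus\{c\}$ by hand --- which you do correctly, bounding the one-sided $g$-weighted averages of $h$ by its one-sided limits at $c$ and inserting intermediate points for the strict case. Conversely, your argument is more self-contained: it needs only the quotient rule for locally absolutely continuous functions and the fundamental theorem of calculus, not the change-of-variables formula nor the theorem that an antiderivative of an a.e.\ increasing function is (strictly) convex. Two technical debts you flag are real but routine and worth discharging explicitly: your key identity holds only for a.e.\ $x$ (those with $f(x)=h(x)g(x)$ and $g(x)>0$), which suffices since only the a.e.\ sign of $Q'$ enters the integral representation of $Q(v)-Q(u)$; and at infinite endpoints one uses that local integrability on a compact unbounded interval means integrability up to that endpoint, so $Q$ extends there as the monotone limit of its values at real points.
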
 

\begin{theorem}[the L'H\^opital rule for monotonicity]
\label{r1}
Consider 
\begin{enumerate}
\item an interval $I\subseteq\left[-\infty,\infty\right]$, 
\item a point $c\in I$ and 
\item two functions $f,g\,\colon\,I\to\mathbb{R}$, such that 
\begin{enumerate}[label=\roman*.]
\item $\left.f\right|_{I\cap\mathbb{R}}$ and $\left.g\right|_{I\cap\mathbb{R}}$ are both differentiable and 
\item $g'{\left(x\right)}\neq 0$, for all $x\in I\cap\mathbb{R}$.
\end{enumerate}
\end{enumerate}
If $\frac{f'}{g'}\colon\,I\cap\mathbb{R}\to\mathbb{R}$ is (strictly) monotonic, then $\frac{f-f{\left(c\right)}}{g-g{\left(c\right)}}\colon\,I\setminus\left\{c\right\}\to\mathbb{R}$ is (strictly) monotonic of the same (strict) monotonicity. 
\end{theorem}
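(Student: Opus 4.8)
The plan is to reduce to a single canonical case by symmetry, fix a constant sign for $g'$, and then run a Cauchy‑mean‑value argument in two stages: one on each side of $c$, and one to bridge across $c$. Write $F:=f-f{\left(c\right)}$ and $G:=g-g{\left(c\right)}$, so that $F{\left(c\right)}=G{\left(c\right)}=0$ and $F'=f'$, $G'=g'$ on $J:=I\cap\mathbb{R}$. First I would normalize: replacing $f$ by $-f$ reverses the monotonicity of both $f'/g'$ and $F/G$ simultaneously, and replacing $g$ by $-g$ does likewise, so it suffices to treat the case in which $g'>0$ on $J$ and $f'/g'$ is (strictly) increasing, the target being that $F/G$ is (strictly) increasing. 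The sign of $g'$ is indeed constant: since $g$ is differentiable on the interval $J$, its derivative has the intermediate value property (Darboux), so a derivative that never vanishes cannot change sign. Hence $g$, and with it $G$, is strictly increasing on $J$ with $G{\left(c\right)}=0$, giving $G<0$ to the left of $c$ and $G>0$ to the right; in particular $G$ does not vanish on $J\setminus\left\{c\right\}$ and $F/G$ is well defined there.

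Next I would pin down the sign of $(F/G)'$. On $J\setminus\left\{c\right\}$ the quotient is differentiable with $(F/G)'=(F'G-FG')/G^2$. Applying the Cauchy mean value theorem to $F,G$ on the interval with endpoints $c$ and $x$ produces a $\xi$ strictly between $c$ and $x$ with $F{\left(x\right)}/G{\left(x\right)}=F'{\left(\xi\right)}/G'{\left(\xi\right)}=(f'/g'){\left(\xi\right)}$, i.e. $F{\left(x\right)}=(f'/g'){\left(\xi\right)}\,G{\left(x\right)}$. Substituting $F'{\left(x\right)}=(f'/g'){\left(x\right)}G'{\left(x\right)}$ and simplifying, I expect the clean identity
\[
\left(\frac{F}{G}\right)'{\left(x\right)}=\frac{G'{\left(x\right)}}{G{\left(x\right)}}\left(\frac{f'}{g'}{\left(x\right)}-\frac{f'}{g'}{\left(\xi\right)}\right).
\]
The sign analysis then closes the first stage: for $x>c$ one has $G{\left(x\right)}>0$ and $\xi<x$, while for $x<c$ one has $G{\left(x\right)}<0$ and $\xi>x$; in either case $G'{\left(x\right)}/G{\left(x\right)}$ and $(f'/g'){\left(x\right)}-(f'/g'){\left(\xi\right)}$ carry the same sign because $f'/g'$ is increasing, so $(F/G)'\ge 0$, and $>0$ in the strict case. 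Thus $F/G$ is (strictly) increasing on each of the two components $J\cap\left(-\infty,c\right)$ and $J\cap\left(c,\infty\right)$.

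To bridge across $c$ I would reuse the representation $F{\left(x\right)}/G{\left(x\right)}=(f'/g'){\left(\xi_x\right)}$ with $\xi_x$ strictly between $c$ and $x$: for $x<c<y$ the associated points satisfy $\xi_x<c<\xi_y$, whence $(f'/g'){\left(\xi_x\right)}\le(f'/g'){\left(\xi_y\right)}$, strictly if the monotonicity is strict, i.e. $(F/G){\left(x\right)}\le(F/G){\left(y\right)}$. Combining this with the two component conclusions yields (strict) monotonicity of $F/G$ on all of $J\setminus\left\{c\right\}$.

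The remaining step is to incorporate any infinite endpoint of $I$, where $f$ and $g$ are not assumed differentiable and the value $(F/G){\left(\pm\infty\right)}$ is fixed by the prescribed endpoint data alone. Here I would pass to the monotone limit of $F/G$ along $J$ toward that endpoint and reconcile it with the endpoint value; this is the one place where the interior hypotheses must be matched against the behaviour of $f,g$ at $\pm\infty$ (their continuity there, equivalently their attaining the one‑sided limits), since unconstrained endpoint values would otherwise be free to violate monotonicity. I expect this endpoint reconciliation, rather than the interior Cauchy‑mean‑value computation, to be the main obstacle.
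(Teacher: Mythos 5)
Your argument on $I\cap\mathbb{R}$ is correct, and it takes a genuinely different route from the paper's. After the same normalization (Darboux forces a constant sign on $g'$; sign flips of $f$ and $g$ reduce everything to $g'>0$ with $\frac{f'}{g'}$ increasing), the paper never differentiates the quotient at all: it forms $h\coloneqq f\circ g^{-1}$ on $g{\left(I\right)}$, computes $h'=\frac{f'}{g'}\circ g^{-1}$, concludes that $h$ is (strictly) convex, and invokes the Galvani lemma, since $\frac{f{\left(x\right)}-f{\left(c\right)}}{g{\left(x\right)}-g{\left(c\right)}}$ is the slope of the chord of $h$ through the fixed point $\left(g{\left(c\right)},f{\left(c\right)}\right)$ and such slopes of a convex function increase. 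That single convexity statement covers both sides of $c$ and the crossing of $c$ at once, and it mirrors the paper's proof of the Gromov theorem (the same composition device), so the two rules get a unified treatment. Your version needs two separate steps instead: the identity $\left(\frac{F}{G}\right)'{\left(x\right)}=\frac{g'{\left(x\right)}}{G{\left(x\right)}}\left(\frac{f'}{g'}{\left(x\right)}-\frac{f'}{g'}{\left(\xi_x\right)}\right)$ with its sign analysis on each component, and then the Cauchy-mean-value representation $\frac{F}{G}{\left(x\right)}=\frac{f'}{g'}{\left(\xi_x\right)}$ to bridge across $c$. What this buys is elementarity: no inverse function, no convexity, no Galvani lemma, only the Cauchy mean value theorem and the fact that a nonnegative (positive) derivative forces (strict) increase. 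Both of your computations check out, including the strict versions.

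Regarding the endpoint \textquote{obstacle} you flag at the end: your instinct is sound, but this is a defect of the statement itself rather than something any proof can repair, and the paper's own proof silently passes over it. If, say, $\infty\in I$, the hypotheses constrain $f$ and $g$ only on $I\cap\mathbb{R}$, so $f{\left(\infty\right)}$ and $g{\left(\infty\right)}$ are free parameters: take $I=\left[0,\infty\right]$, $c=0$, $g=\arctan$ and $f=\frac{1}{2}\arctan^2$ on $\left[0,\infty\right)$, with $g{\left(\infty\right)}=\frac{\pi}{2}$ and $f{\left(\infty\right)}=0$; then $\frac{f'}{g'}=\arctan$ is strictly increasing, yet $\frac{f-f{\left(0\right)}}{g-g{\left(0\right)}}$ equals $\frac{1}{2}\arctan$ on $\left(0,\infty\right)$ and $0$ at $\infty$, hence is not increasing. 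So no limit-reconciliation argument can close your last step under the literal hypotheses; the theorem is true at infinite endpoints only under the implicit assumption that $f$ and $g$ attain their limits there, which is exactly what the paper's proof presupposes when it declares $g$ strictly monotonic and $h$ convex on all of $g{\left(I\right)}$ rather than on $g{\left(I\cap\mathbb{R}\right)}$. Under that continuity reading, your reconciliation does go through (the one-sided limit of the monotone quotient is its supremum or infimum, which then coincides with the endpoint value), and your proof is complete.
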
 

It can be shown (see \hyperref[equivalence]{\S \ref*{equivalence}}) that \hyperref[r0]{Theorem \ref*{r0}} is stronger than \hyperref[r1]{Theorem \ref*{r1}}, a fact that has already been observed in \cite{estrada2017hopital}. However, the latter one is an independent result of differential calculus, for the proof of which no tools of the integration theory are needed (see \hyperref[the lhopital rule for monotonicity via differential calculus]{\S \ref*{the lhopital rule for monotonicity via differential calculus}}). 

The goal of the present manuscript is not only the proof of \hyperref[r0]{Theorem \ref*{r0}} and \hyperref[r1]{Theorem \ref*{r1}}, but also the introduction of their generalizations to higher antiderivatives and derivatives, respectively. Our analysis is organized as follows. In \hyperref[basic notions]{\S \ref*{basic notions}} we review some necessary notions used for the compact statement of the aforementioned generalizations. In \hyperref[generalized fraction rules for monotonicity]{\S \ref*{generalized fraction rules for monotonicity}}, after the statement of the main results, we examine the relation between them and we proceed to their proof. In \hyperref[corollaries and examples]{\S \ref*{corollaries and examples}} we employ our findings in some novel applications. In \hyperref[the lhopital rule for monotonicity via differential calculus]{\S \ref*{the lhopital rule for monotonicity via differential calculus}} we provide an alternative proof of the generalized L'H\^opital rule for monotonicity with the exclusive utilization of the differential calculus toolbox. 

\section{Basic notions}
\label{basic notions}  

For the statement of our results we make a short, necessary note on the notation used. 
\begin{enumerate}
\item For every 
\begin{enumerate}[label=\roman*.]
\item $n\in\mathbb{N}$, 
\item interval $I\subseteq\left[-\infty,\infty\right]$ when $n=1$ or $I\subseteq\mathbb{R}$ when $n\neq 1$, 
\item $c\in I$ and
\item locally Lebesgue integrable $f\colon I\to\mathbb{R}$, 
\end{enumerate}
$A_{n,f,c}$ stands for \textit{the antiderivative} of order $n$ for $f$ at $c$, i.e. 
\begin{align*}
A_{n,f,c}\colon\,I&\to\mathbb{R}\\
x&\mapsto A_{n,f,c}{\left(x\right)}\coloneqq \frac{1}{\left(n-1\right)!}\int\limits_{c}^{x}{f{\left(t\right)}{\left(x-t\right)}^{n-1}\mathrm{d}t}.
\end{align*} 
The name of this function is nothing but random. It comes from \textit{the Cauchy formula of repeated integration}, $$A_{n,f,c}=\int\limits_{c}^{\cdot}{{\int\limits_{c}^{t_1}{{\cdots{\int\limits_{c}^{t_{n-1}}{f{\left(t_n\right)}\mathrm{d}t_n}}\dots}\mathrm{d}t_2}}\mathrm{d}t_1},\text{ when }n\neq 1.$$ This formula is introduced in \cite[Trente-Cinqui\`{e}me Le\c{c}on in page 137]{cauchy1823re} with the additional assumption of $f$ being continuous. The result is then derived from the density of continuous functions in the space of integrable ones (see, e.g., \cite[Theorem 11.5.8 in page 391]{choudary2014real}). With this equality at hand we can directly verify that 
\begin{equation}
\label{knk}
A_{n,f,c}=A_{k,A_{n-k,f,c},c},\text{ }\forall k\left\{1,\dots,n-1\right\},\text{ when }n\neq 1.
\end{equation}
Moreover, by the use of the fundamental theorem of calculus (see, e.g., \cite[Theorem B.4.3 in page 497]{choudary2014real}) we obtain that, when $n\neq 1$, the function $\left.A_{n,f,c}\right|_{I\cap\mathbb{R}}$ is $\left(n-1\right)$-times differentiable and $n$-times Lebesgue-almost everywhere differentiable, with $${A_{n,f,c}}^{\left(k\right)}=A_{n-k,f,c},\text{ }\forall k\in\left\{1,\dots,n-1\right\}$$ and $${A_{n,f,c}}^{\left(n\right)}=f,\text{ Lebesgue-almost everywhere}.$$ If, in addition, $f$ is continuous, then $\left.A_{n,f,c}\right|_{I\cap\mathbb{R}}$ is $n$-times differentiable, with 
\begin{equation}
\label{cAnnf}
{A_{n,f,c}}^{\left(n\right)}=f.
\end{equation}
\item For every 
\begin{enumerate}[label=\roman*.] 
\item $n\in\mathbb{N}$, 
\item interval $I\subseteq\mathbb{R}$, 
\item $c\in I$ and 
\item locally Lebesgue integrable $f\colon,I\to\mathbb{R}$, 
\end{enumerate}
$M_{n,f,c}$ stands for \textit{the mean} of order $n$ for $f$ at $c$, i.e. 
\begin{align*}
M_{n,f,c}\colon\,I\setminus\left\{c\right\}&\to\mathbb{R}\\
x&\mapsto M_{n,f,c}{\left(x\right)}\coloneqq \frac{n}{{\left(x-c\right)}^n}\int\limits_{c}^{x}{f{\left(t\right)}{\left(x-t\right)}^{n-1}\mathrm{d}t}.
\end{align*}
The concept behind the above definition lies in the observation that $$A_{n,1,c}{\left(x\right)}=\frac{{\left(x-c\right)}^n}{n!},\text{ }\forall x\in \mathbb{R},$$ which confirms the expected equality $$M_{n,f,c}=\frac{A_{n,f,c}}{A_{n,1,c}}.$$
\item For every 
\begin{enumerate}[label=\roman*.]
\item $n\in\mathbb{N}_0$, 
\item interval $I\subseteq\left[-\infty,\infty\right]$ when $n=0$ or $I\subseteq\mathbb{R}$ when $n\neq 0$, 
\item $c\in I$ and 
\item $n$-times differentiable in $I\cap\mathbb{R}$ $f\colon I\to\mathbb{R}$, 
\end{enumerate}
$T_{n,f,c}$ and $R_{n,f,c}$ stand for \textit{the Taylor polynomial and remainder}, respectively, of order $n$ for $f$ at $c$, i.e. 
\begin{align*}
T_{n,f,c}\colon\,I&\to\mathbb{R}\\
x&\mapsto T_{n,f,c}{\left(x\right)}\coloneqq \sum\limits_{k=0}^n{\frac{f^{\left(k\right)}{\left(c\right)}}{k!}{\left(x-c\right)}^k},
\end{align*}
and 
\begin{align*}
R_{n,f,c}\colon\,I&\to\mathbb{R}\\
x&\mapsto R_{n,f,c}{\left(x\right)}\coloneqq f{\left(x\right)}-T_{n,f,c}{\left(x\right)}.
\end{align*}
If, in addition, $n\in\mathbb{N}$ and $f^{\left(n\right)}\colon\,I\cap\mathbb{R}\to\mathbb{R}$ is locally Lebesgue integrable, then \textit{the integral form of the remainder} (see, e.g., \cite[\S{1.6} in page 62]{bourbaki2004elements}) implies that 
\begin{equation}
\label{Rn1fAnfn}
R_{n-1,f,c}=A_{n,f^{\left(n\right)},c}.
\end{equation}
\end{enumerate}

\section{Generalized fraction rules for monotonicity}
\label{generalized fraction rules for monotonicity} 

\subsection{Statement}
\label{statement} 

For the proper statement of the main results, we need the following result. 

\begin{proposition}
\label{prqst}
Consider
\begin{enumerate}[label=\roman*.]
\item a natural number $n\in\mathbb{N}$, 
\item an interval $I\subseteq\left[-\infty,\infty\right]$ when $n=1$ or $I\subseteq\mathbb{R}$ when $n\neq 1$, 
\item a point $c\in I$ and 
\item a function $f\colon\,I\to\mathbb{R}$. 
\end{enumerate}
\begin{enumerate}
\item If $f$  
\begin{enumerate}[label=\alph*.]
\item is locally Lebesgue integrable and 
\item preserves Lebesgue-almost everywhere a non zero sign,
\end{enumerate}
then ${A_{n,f,c}}^{-1}{\left(\left\{0\right\}\right)}=\left\{c\right\}$. 
\item If  
\begin{enumerate}[label=\alph*.]
\item $\left.f\right|_{I\cap\mathbb{R}}$ is $n$-times differentiable and
\item $f^{\left(n\right)}{\left(x\right)}\neq 0$, for all $x\in I\cap\mathbb{R}$, 
\end{enumerate}
then ${R_{n-1,f,c}}^{-1}{\left(\left\{0\right\}\right)}=\left\{c\right\}$. 
\end{enumerate}
\end{proposition}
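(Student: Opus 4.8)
The plan is to treat the two parts separately: the first by a direct sign analysis of the defining integral, the second by an iterated application of Rolle's theorem.

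For part 1, first observe that $A_{n,f,c}{\left(c\right)}=0$, since the integral over a degenerate interval vanishes, so $c\in{A_{n,f,c}}^{-1}{\left(\left\{0\right\}\right)}$ in any case; it remains to show that $A_{n,f,c}{\left(x\right)}\neq 0$ whenever $x\neq c$. Fix such an $x$ and assume, without loss of generality, that $f>0$ Lebesgue-almost everywhere (the case $f<0$ being symmetric). On the open interval with endpoints $c$ and $x$ the weight $t\mapsto{\left(x-t\right)}^{n-1}$ keeps a constant sign and vanishes at most at the single point $t=x$: it is positive throughout when $x>c$, and it has the constant sign ${\left(-1\right)}^{n-1}$ when $x<c$. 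Hence the integrand $t\mapsto f{\left(t\right)}{\left(x-t\right)}^{n-1}$ is of one and the same sign Lebesgue-almost everywhere on that interval and is nonzero on a set of positive measure, so its integral is strictly of that sign; accounting for the orientation of the integral (a change of sign when $x<c$), this forces $A_{n,f,c}{\left(x\right)}\neq 0$. I would phrase the last step so that the parity of $n-1$ and the orientation are handled uniformly, the only point that matters being that the integrand does not change sign.

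For part 2 the natural self-contained route is differential. Writing $R\coloneqq R_{n-1,f,c}$, the defining property of the Taylor polynomial gives $R^{\left(k\right)}{\left(c\right)}=0$ for every $k\in\left\{0,\dots,n-1\right\}$, while $T_{n-1,f,c}$ has degree at most $n-1$, so $R^{\left(n\right)}=f^{\left(n\right)}$ on $I\cap\mathbb{R}$. Again $R{\left(c\right)}=0$, so only the absence of a further zero needs proof. Suppose, for contradiction, that $R{\left(x_0\right)}=0$ for some $x_0\in I\setminus\left\{c\right\}$, necessarily finite since $I\subseteq\mathbb{R}$ when $n\neq 1$. Applying Rolle's theorem to $R$ on the interval with endpoints $c$ and $x_0$ produces a zero $y_1$ of $R'$ strictly between them; since in addition $R'{\left(c\right)}=0$, a second application of Rolle to $R'$ on the interval with endpoints $c$ and $y_1$ produces a zero $y_2$ of $R''$, and so on. After $n$ steps this manufactures a point $y_n$, strictly between $c$ and $x_0$ and hence in $I\cap\mathbb{R}$, at which $R^{\left(n\right)}=f^{\left(n\right)}$ vanishes, contradicting hypothesis b. The bookkeeping to watch is that at every stage one genuinely has two distinct zeros of the relevant derivative, one of which is always $c$; this is guaranteed precisely because $R$ vanishes to order $n-1$ at $c$.

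Alternatively, one may reduce part 2 to part 1 through the relation \eqref{Rn1fAnfn}, namely $R_{n-1,f,c}=A_{n,f^{\left(n\right)},c}$: since $f^{\left(n\right)}$ is a derivative it enjoys the Darboux property, so being nowhere zero it keeps a constant sign, and part 1 applied to $f^{\left(n\right)}$ would conclude. The main obstacle here, and the reason I would favour the Rolle argument, is that \eqref{Rn1fAnfn} presupposes that $f^{\left(n\right)}$ is locally Lebesgue integrable, which is not among the hypotheses; one would first have to recover this from the constant sign of $f^{\left(n\right)}$ (a monotone $f^{\left(n-1\right)}$ has a locally integrable derivative). This low-regularity mismatch is the genuinely delicate point of the proposition, the only other subtlety being the degenerate case $n=1$ with an infinite endpoint, which the iterated-Rolle argument does not reach directly and which must be settled from the strict monotonicity of $f$ forced by $f'\neq 0$ together with the value of $f$ at that endpoint.
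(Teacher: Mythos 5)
Your part~1 coincides with the paper's proof: both reduce to the observation that, for $x\neq c$, the integrand $f{\left(t\right)}{\left(x-t\right)}^{n-1}$ keeps a single sign Lebesgue-almost everywhere on the interval with endpoints $c$ and $x$, so that $A_{n,f,c}{\left(x\right)}\neq 0$; you merely spell out the orientation and parity bookkeeping that the paper compresses into one sentence.

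For part~2 your primary argument is genuinely different from the paper's, and it is correct where it applies. The paper proves part~2 by exactly the route you relegate to an alternative: the Darboux theorem gives $f^{\left(n\right)}$ a constant non-zero sign, hence $f^{\left(n-1\right)}$ is strictly monotone, hence $f^{\left(n\right)}$ is locally Lebesgue integrable (the derivative of a monotone function is locally integrable), and then \eqref{Rn1fAnfn} reduces part~2 to part~1 applied to $f^{\left(n\right)}$. So the ``obstacle'' you identified is precisely the step the paper supplies, and your sketch of how to supply it is the paper's argument verbatim. Your iterated-Rolle proof is instead, nearly word for word, the proof the paper gives in the appendix for the stronger Proposition~\ref{prqstd}, namely that every derivative ${R_{n-1,f,c}}^{\left(k\right)}$, $k\in\left\{0,\dots,n-1\right\}$, vanishes only at $c$ --- a statement your induction establishes as a by-product, since at each stage you use that $c$ is a zero of the next derivative. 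What your route buys is a proof living entirely inside differential calculus, with no integral form of the remainder and no integrability discussion; what the paper's route buys is economy, reusing part~1 instead of running a fresh induction.

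One caveat concerning the case you flag ($n=1$ with an infinite endpoint): your suggestion that it can be settled ``from the strict monotonicity of $f$ together with the value of $f$ at that endpoint'' cannot be carried out from the stated hypotheses, because differentiability is assumed only on $I\cap\mathbb{R}$ and nothing ties $f{\left(\pm\infty\right)}$ to the limiting behaviour of $f$. For instance, take $I=\left[0,\infty\right]$, $f=\arctan$ on $\left[0,\infty\right)$ and $f{\left(\infty\right)}\coloneqq 1$: the hypotheses of part~2 hold for $n=1$, yet $R_{0,f,\infty}=f-1$ vanishes at $\tan{1}$ as well as at $\infty$. Thus at an infinite endpoint the conclusion genuinely requires continuity of $f$ there as an additional hypothesis. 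This is a defect of the statement rather than of your proof: the paper's own argument (via \eqref{Rn1fAnfn}, whose validity up to an infinite endpoint also presupposes continuity there) and its appendix variant (via the extended Rolle Theorem~\ref{rll}, which assumes continuity on the compact interval) pass over the same point silently, so you are no worse off --- but your closing claim that the case ``must be settled'' by monotonicity alone should be replaced by the observation that it cannot be.
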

\begin{proof}
\begin{enumerate}
\item To begin with, we have that $A_{n,f,c}{\left(c\right)}=0$.  

Since $f$ preserves Lebesgue-almost everywhere a non zero sign, we deduce that for every $x\in I\setminus\left\{c\right\}$ the function $\left(x-\mathrm{id}\right)^{n-1}f\colon\,\left(\min{\left\{c,x\right\}},\max{\left\{c,x\right\}}\right)\to\mathbb{R}$ also preserves Lebesgue-almost everywhere a non zero sign, where $\mathrm{id}$ stands for the identity function. Thus $A_{n,f,c}{\left(x\right)}\neq 0$ and the result then follows. 
\item We have that $R_{n-1,f,c}{\left(c\right)}=0$. 

Since $f^{\left(n\right)}{\left(x\right)}\neq 0$, for all $x\in I\cap\mathbb{R}$, from the Darboux theorem (see, e.g., \cite[Theorem 8.3.2 in page 228]{choudary2014real}) we have that $f^{\left(n\right)}$ preserves a non zero sign, that is $f^{\left(n-1\right)}\colon\,I\cap\mathbb{R}\to\mathbb{R}$ is strictly monotonic, hence $f^{\left(n\right)}$ is locally Lebesgue integrable (see, e.g., \cite[Theorem B.2.5 in 490]{choudary2014real}). 

Now, we first apply point 1. for the function $f^{\left(n\right)}$ and we then employ \eqref{Rn1fAnfn}, in order to get the desired result. 
\end{enumerate}
\end{proof}

With \hyperref[prqst]{Proposition \ref*{prqst}} at hand, we can now state the generalizations of \hyperref[r0]{Theorem \ref*{r0}} and \hyperref[r1]{Theorem \ref*{r1}} to higher antiderivatives and derivatives, respectively. 

\begin{theorem}[generalization to higher antiderivatives]
\label{r00}
Consider
\begin{enumerate}
\item a natural number $n\in\mathbb{N}$, 
\item an interval $I\subseteq\left[-\infty,\infty\right]$ when $n=1$ or $I\subseteq\mathbb{R}$ when $n\neq 1$, 
\item a point $c\in I$ and 
\item two functions $f,g\,\colon\,I\to\mathbb{R}$, such that 
\begin{enumerate}[label=\roman*.]
\item $f$ and $g$ are both locally Lebesgue integrable and 
\item $g$ preserves Lebesgue-almost everywhere a non zero sign.
\end{enumerate}
\end{enumerate}
If $\frac{f}{g}\colon\,I\to\left[-\infty,\infty\right]$ is Lebesgue-almost everywhere (strictly) monotonic, then $\frac{A_{n,f,c}}{A_{n,g,c}}\colon\,I\setminus\left\{c\right\}\to\mathbb{R}$ is (strictly) monotonic of the same (strict) monotonicity. 
\end{theorem}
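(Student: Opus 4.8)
The plan is to argue by induction on $n$, with the base case $n=1$ being nothing but Theorem~\ref{r0}, since $A_{1,f,c}=\int\limits_{c}^{\cdot}f$ and $A_{1,g,c}=\int\limits_{c}^{\cdot}g$. Each inductive step will be reduced to a single application of Theorem~\ref{r0} through the Cauchy relation $A_{n,f,c}=A_{1,A_{n-1,f,c},c}$, which is the $k=1$ instance of \eqref{knk}. Before starting, Proposition~\ref{prqst} (point 1) guarantees that $A_{n,g,c}$ vanishes only at $c$, so that $\frac{A_{n,f,c}}{A_{n,g,c}}$ is a well-defined real-valued function on $I\setminus\left\{c\right\}$.

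The key difficulty is that Theorem~\ref{r0} cannot be iterated globally: in order to apply it to the pair $\left(A_{n-1,f,c},A_{n-1,g,c}\right)$ one would need the ``denominator'' $A_{n-1,g,c}$ to keep a constant non-zero sign on all of $I$, whereas (taking, say, $g>0$ Lebesgue-almost everywhere) a direct sign computation shows that $A_{n-1,g,c}$ is positive for $x>c$ but has sign $\left(-1\right)^{n-1}$ for $x<c$, so it changes sign across $c$ whenever $n$ is even. I therefore intend to work separately on the two subintervals $I_-\coloneqq I\cap\left(-\infty,c\right]$ and $I_+\coloneqq I\cap\left[c,\infty\right)$. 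On $I_+$ the continuous function $A_{n-1,g,c}$ is non-zero on $\left(c,\infty\right)\cap I$ by Proposition~\ref{prqst}, hence of constant sign there, i.e. it preserves a non-zero sign Lebesgue-almost everywhere on $I_+$; the inductive hypothesis gives that $\frac{A_{n-1,f,c}}{A_{n-1,g,c}}$ is monotonic of the required type on $I_+\setminus\left\{c\right\}$, so Theorem~\ref{r0}, applied on $I_+$ to the locally integrable pair $\left(A_{n-1,f,c},A_{n-1,g,c}\right)$, yields that $\frac{A_{1,A_{n-1,f,c},c}}{A_{1,A_{n-1,g,c},c}}=\frac{A_{n,f,c}}{A_{n,g,c}}$ is monotonic of the same type on $I_+\setminus\left\{c\right\}$. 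The identical argument on $I_-$ settles the left side.

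It remains to glue the two one-sided monotonicities into monotonicity on the whole of $I\setminus\left\{c\right\}$, and this is the main obstacle, precisely because the two pieces sit on opposite sides of the point where the denominator changes sign. Assuming $\frac{f}{g}$ increasing (the decreasing case being symmetric), I would fix an increasing representative $\rho$ of $\frac{f}{g}$ and use the weighted-average representation $$\frac{A_{n,f,c}{\left(x\right)}}{A_{n,g,c}{\left(x\right)}}=\frac{\int\limits_{c}^{x}{\rho{\left(t\right)}g{\left(t\right)}{\left(x-t\right)}^{n-1}\mathrm{d}t}}{\int\limits_{c}^{x}{g{\left(t\right)}{\left(x-t\right)}^{n-1}\mathrm{d}t}},$$ in which, for each fixed $x\neq c$, the weight $g{\left(t\right)}{\left(x-t\right)}^{n-1}\mathrm{d}t$ keeps a constant sign in $t$, so the right-hand side is a genuine weighted average of $\rho$ over the interval between $c$ and $x$. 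Consequently $\frac{A_{n,f,c}\left(x\right)}{A_{n,g,c}\left(x\right)}\le\rho\left(c^-\right)$ for every $x<c$ and $\frac{A_{n,f,c}\left(x\right)}{A_{n,g,c}\left(x\right)}\ge\rho\left(c^+\right)$ for every $x>c$, where $\rho\left(c^\pm\right)$ are the one-sided limits of the monotone $\rho$ at $c$. Since $\rho\left(c^-\right)\le\rho\left(c^+\right)$ by monotonicity, for any $x_1<c<x_2$ in $I$ we obtain $\frac{A_{n,f,c}\left(x_1\right)}{A_{n,g,c}\left(x_1\right)}\le\rho\left(c^-\right)\le\rho\left(c^+\right)\le\frac{A_{n,f,c}\left(x_2\right)}{A_{n,g,c}\left(x_2\right)}$, which together with the one-sided monotonicities established above delivers monotonicity on all of $I\setminus\left\{c\right\}$; in the strict case the within-side strict inequalities upgrade this chain to a strict one, while if $c$ is an endpoint of $I$ the set $I\setminus\left\{c\right\}$ is connected and no gluing is needed.
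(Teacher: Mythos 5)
Your proof is correct, but it resolves the crux of the induction step by a genuinely different mechanism than the paper's. Both you and the paper confront the same obstacle — for $k$ odd, $A_{k,g,c}$ takes opposite signs on the two sides of $c$, so the pair $\left(A_{k,f,c},A_{k,g,c}\right)$ violates the sign hypothesis of Theorem~\ref{r0} on all of $I$ — but where you split $I$ at $c$, apply Theorem~\ref{r0} on each half, and then glue, the paper removes the obstacle before applying anything: it passes to $\tilde{f}\coloneqq{\left(\mathrm{sgn}\circ\left(\mathrm{id}-c\right)\right)}^k A_{k,f,c}$ and $\tilde{g}\coloneqq{\left(\mathrm{sgn}\circ\left(\mathrm{id}-c\right)\right)}^k A_{k,g,c}$, notes that $\tilde{g}$ is Lebesgue-almost everywhere positive on all of $I$ while $\frac{\tilde{f}}{\tilde{g}}=\frac{A_{k,f,c}}{A_{k,g,c}}$, and applies Theorem~\ref{r0} once, globally; the sign factor passes through $A_{1,\cdot,c}$ and cancels again, so by \eqref{knk} the conclusion is exactly the monotonicity of $\frac{A_{k+1,f,c}}{A_{k+1,g,c}}$ on $I\setminus\left\{c\right\}$, with no cross-point gluing ever needed. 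Your gluing is sound — the weighted-average representation does give $\frac{A_{n,f,c}}{A_{n,g,c}}\leq\rho{\left(c^-\right)}$ to the left of $c$ and $\geq\rho{\left(c^+\right)}$ to the right, and combining the non-strict cross chain with the within-side strict inequalities does recover strictness — but it costs an extra measure-theoretic step (fixing a monotone representative $\rho$ of the almost-everywhere monotone $\frac{f}{g}$) that the paper's trick avoids entirely; in exchange, your route makes explicit the interpretation of $\frac{A_{n,f,c}}{A_{n,g,c}}$ as a $g$-weighted mean of $\frac{f}{g}$, which stays invisible in the paper. One difference of scope: you invoke Theorem~\ref{r0} as a black box for the base case, whereas the paper's proof is self-contained there, proving Theorem~\ref{r0} itself via the substitution $h=A_{1,f,c}\circ{A_{1,g,c}}^{-1}$, convexity, and the Galvani lemma; since Theorem~\ref{r0} is stated earlier in the paper as a known result, your citation is legitimate, but it leaves the Gromov theorem itself unproved, which the paper explicitly set out to include.
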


\begin{theorem}[generalization to higher derivatives]
\label{r2}
Consider
\begin{enumerate}
\item a natural number $n\in\mathbb{N}$, 
\item an interval $I\subseteq\left[-\infty,\infty\right]$ when $n=1$ or $I\subseteq\mathbb{R}$ when $n\neq 1$, 
\item a point $c\in I$ and 
\item two functions $f,g\,\colon\,I\to\mathbb{R}$, such that 
\begin{enumerate}[label=\roman*.]
\item $\left.f\right|_{I\cap\mathbb{R}}$ and $\left.g\right|_{I\cap\mathbb{R}}$ are both $n$-times differentiable and
\item $g^{\left(n\right)}{\left(x\right)}\neq 0$, for all $x\in I\cap\mathbb{R}$. 
\end{enumerate}
\end{enumerate}
If $\frac{f^{\left(n\right)}}{g^{\left(n\right)}}\colon\,I\cap\mathbb{R}\to\mathbb{R}$ is (strictly) monotonic, then $\frac{R_{n-1,f,c}}{R_{n-1,g,c}}\colon\,I\setminus\left\{c\right\}\to\mathbb{R}$ is (strictly) monotonic of the same (strict) monotonicity. 
\end{theorem}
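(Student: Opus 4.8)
The plan is to deduce the statement from \hyperref[r00]{Theorem \ref*{r00}} by passing to the $n$-th derivatives and invoking the integral representation of the Taylor remainder, \eqref{Rn1fAnfn}. Concretely, I would apply \hyperref[r00]{Theorem \ref*{r00}} to the pair $f^{\left(n\right)},g^{\left(n\right)}\colon I\cap\mathbb{R}\to\mathbb{R}$, with the same $n$ and $c$, so that its conclusion about the quotient $\frac{A_{n,f^{\left(n\right)},c}}{A_{n,g^{\left(n\right)},c}}$ becomes, after rewriting via \eqref{Rn1fAnfn}, exactly the desired conclusion about $\frac{R_{n-1,f,c}}{R_{n-1,g,c}}$. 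This mirrors the remark that \hyperref[r0]{Theorem \ref*{r0}} is stronger than \hyperref[r1]{Theorem \ref*{r1}}, now at the level of their higher-order generalizations.

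Before that reduction can be carried out, I must verify that $f^{\left(n\right)}$ and $g^{\left(n\right)}$ meet the hypotheses of \hyperref[r00]{Theorem \ref*{r00}}. For $g^{\left(n\right)}$ I would argue exactly as in the proof of \hyperref[prqst]{Proposition \ref*{prqst}}, point 2.: since $g^{\left(n\right)}$ never vanishes on $I\cap\mathbb{R}$, the Darboux theorem forces it to preserve a genuine (hence Lebesgue-almost everywhere) non zero sign, and the consequent strict monotonicity of $g^{\left(n-1\right)}$ yields that $g^{\left(n\right)}$ is locally Lebesgue integrable. The monotonicity hypothesis on $\frac{f^{\left(n\right)}}{g^{\left(n\right)}}$ transfers verbatim, since monotonicity everywhere on $I\cap\mathbb{R}$ is in particular Lebesgue-almost everywhere monotonicity.

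The one genuinely non-routine point, and the step I expect to be the main obstacle, is the local Lebesgue integrability of $f^{\left(n\right)}$, which is not among the assumptions. Here I would exploit the monotonicity hypothesis itself: on any compact subinterval of $I\cap\mathbb{R}$ the monotonic function $\frac{f^{\left(n\right)}}{g^{\left(n\right)}}$ is bounded, so the factorization $f^{\left(n\right)}=\frac{f^{\left(n\right)}}{g^{\left(n\right)}}\cdot g^{\left(n\right)}$ exhibits $f^{\left(n\right)}$ as the pointwise product of a locally bounded measurable function and the locally Lebesgue integrable $g^{\left(n\right)}$, whence $\lvert f^{\left(n\right)}\rvert\le M\lvert g^{\left(n\right)}\rvert$ on that subinterval and $f^{\left(n\right)}$ is locally Lebesgue integrable. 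With this established, the hypotheses needed for \eqref{Rn1fAnfn} are satisfied for both $f$ and $g$.

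Finally, I would assemble the pieces. \hyperref[r00]{Theorem \ref*{r00}}, applied to $f^{\left(n\right)}$ and $g^{\left(n\right)}$, gives that $\frac{A_{n,f^{\left(n\right)},c}}{A_{n,g^{\left(n\right)},c}}\colon I\setminus\left\{c\right\}\to\mathbb{R}$ is (strictly) monotonic of the same (strict) monotonicity as $\frac{f^{\left(n\right)}}{g^{\left(n\right)}}$; substituting the identities $A_{n,f^{\left(n\right)},c}=R_{n-1,f,c}$ and $A_{n,g^{\left(n\right)},c}=R_{n-1,g,c}$ furnished by \eqref{Rn1fAnfn} then delivers the claim. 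An entirely self-contained alternative, which avoids integration theory and relies only on the differential calculus toolbox, is deferred to \hyperref[the lhopital rule for monotonicity via differential calculus]{\S \ref*{the lhopital rule for monotonicity via differential calculus}}.
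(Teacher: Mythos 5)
Your proposal is correct and coincides essentially verbatim with the paper's own route: the paper proves this statement via Proposition~\ref{str1}, whose proof likewise establishes local Lebesgue integrability of $g^{\left(n\right)}$ by the Darboux-theorem argument of point 2.\ of Proposition~\ref{prqst}, obtains that of $f^{\left(n\right)}$ from the factorization $f^{\left(n\right)}=\frac{f^{\left(n\right)}}{g^{\left(n\right)}}g^{\left(n\right)}$ with the locally bounded monotonic quotient, and then applies Theorem~\ref{r00} to $f^{\left(n\right)}$, $g^{\left(n\right)}$ together with \eqref{Rn1fAnfn}. You even correctly identified the integrability of $f^{\left(n\right)}$ as the one non-routine verification, exactly as the paper treats it.
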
 

\subsection{Equivalence?}
\label{equivalence} 

In general, \hyperref[r00]{Theorem \ref*{r00}} is stronger than \hyperref[r2]{Theorem \ref*{r2}}. 

\begin{proposition}
\label{str1}
\hyperref[r00]{Theorem \ref*{r00}} implies \hyperref[r2]{Theorem \ref*{r2}}. 
\end{proposition}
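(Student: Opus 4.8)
The plan is to reduce \hyperref[r2]{Theorem \ref*{r2}} to \hyperref[r00]{Theorem \ref*{r00}} by applying the latter to the pair of top-order derivatives $f^{\left(n\right)},g^{\left(n\right)}$ and then translating the conclusion back to the Taylor remainders by means of the identity \eqref{Rn1fAnfn}. Concretely, I would assume the hypotheses of \hyperref[r2]{Theorem \ref*{r2}} and observe that, by \eqref{Rn1fAnfn}, the quotient $\frac{R_{n-1,f,c}}{R_{n-1,g,c}}$ is nothing but $\frac{A_{n,f^{\left(n\right)},c}}{A_{n,g^{\left(n\right)},c}}$. Hence it suffices to verify that the pair $f^{\left(n\right)},g^{\left(n\right)}$ fulfils the hypotheses of \hyperref[r00]{Theorem \ref*{r00}} with the same $n$, $I$ and $c$, for then the conclusion of that theorem is exactly the asserted monotonicity of $\frac{R_{n-1,f,c}}{R_{n-1,g,c}}$, of the same (strict) type.

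First I would dispose of the conditions on $g^{\left(n\right)}$. Exactly as in the proof of \hyperref[prqst]{Proposition \ref*{prqst}}, the hypothesis $g^{\left(n\right)}{\left(x\right)}\neq 0$ for all $x\in I\cap\mathbb{R}$, combined with the Darboux property of derivatives, forces $g^{\left(n\right)}$ to keep a constant non zero sign; in particular it preserves Lebesgue-almost everywhere a non zero sign, as required. The same conclusion makes $g^{\left(n-1\right)}$ strictly monotonic, whence $g^{\left(n\right)}$ is locally Lebesgue integrable. Finally, the monotonicity hypothesis of \hyperref[r2]{Theorem \ref*{r2}} is literally the hypothesis that $\frac{f^{\left(n\right)}}{g^{\left(n\right)}}$ be (strictly) monotonic that \hyperref[r00]{Theorem \ref*{r00}} requires.

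The one genuinely new point is the local Lebesgue integrability of $f^{\left(n\right)}$, which is not assumed outright; this is the step I expect to be the main obstacle, since it is the only place where integrability of the numerator must be manufactured rather than being handed to us. Here I would exploit the monotonicity hypothesis itself: being (strictly) monotonic, the quotient $\frac{f^{\left(n\right)}}{g^{\left(n\right)}}$ is measurable and, on every compact subinterval of $I\cap\mathbb{R}$, squeezed between its endpoint values, hence locally bounded. Writing $f^{\left(n\right)}=\frac{f^{\left(n\right)}}{g^{\left(n\right)}}\cdot g^{\left(n\right)}$ as the product of a locally bounded measurable function and the locally Lebesgue integrable function $g^{\left(n\right)}$, I conclude that $f^{\left(n\right)}$ is locally Lebesgue integrable as well, which simultaneously legitimises the use of \eqref{Rn1fAnfn}.

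With all hypotheses of \hyperref[r00]{Theorem \ref*{r00}} now in place for $f^{\left(n\right)},g^{\left(n\right)}$, its application yields the claim. The only remaining subtlety concerns the degenerate endpoints when $n=1$ and $I$ contains $\pm\infty$: there $f',g'$ are a priori defined only on $I\cap\mathbb{R}$, but since $I\setminus\left(I\cap\mathbb{R}\right)\subseteq\left\{-\infty,\infty\right\}$ is Lebesgue-null, one may extend them to $I$ by arbitrary values without disturbing local integrability, the almost-everywhere sign of $g'$, the almost-everywhere monotonicity of $\frac{f'}{g'}$, or the antiderivatives $A_{1,f',c},A_{1,g',c}$; the invocation of \hyperref[r00]{Theorem \ref*{r00}} then goes through verbatim, completing the reduction.
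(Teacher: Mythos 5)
Your proposal is correct and follows essentially the same route as the paper's own proof: use the Darboux theorem to give $g^{\left(n\right)}$ a constant non zero sign (hence strict monotonicity of $g^{\left(n-1\right)}$ and local Lebesgue integrability of $g^{\left(n\right)}$), obtain local Lebesgue integrability of $f^{\left(n\right)}=\frac{f^{\left(n\right)}}{g^{\left(n\right)}}\,g^{\left(n\right)}$ from the local boundedness of the (strictly) monotonic quotient, then apply \hyperref[r00]{Theorem \ref*{r00}} to $f^{\left(n\right)},g^{\left(n\right)}$ and translate back via \eqref{Rn1fAnfn}. Your closing remark on extending $f',g'$ to the Lebesgue-null infinite endpoints when $n=1$ is a harmless extra precaution that the paper leaves implicit.
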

\begin{proof}
Under the hypothesis of \hyperref[r2]{Theorem \ref*{r2}}, we first deduce that both $f^{\left(n\right)},g^{\left(n\right)}\colon\, I\cap\mathbb{R}\to\mathbb{R}$ are locally Lebesgue integrable. Indeed, we can argue as in the proof of point 2. of \hyperref[prqst]{Proposition \ref*{prqst}}, in order to show that $g^{\left(n\right)}$ is locally Lebesgue integrable. Moreover, $\frac{f^{\left(n\right)}}{g^{\left(n\right)}}$ is locally bounded since it is (strictly) monotonic, hence we write $$f^{\left(n\right)}=\frac{f^{\left(n\right)}}{g^{\left(n\right)}}g^{\left(n\right)}$$ and we conclude that $f^{\left(n\right)}$ is also locally Lebesgue integrable as a product of a locally bounded function and a locally Lebesgue integrable one. 

Now, we first apply \hyperref[r00]{Theorem \ref*{r00}} for the functions $f^{\left(n\right)}$ and $g^{\left(n\right)}$ and we then employ \eqref{Rn1fAnfn}. 
\end{proof}

We can weaken \hyperref[r00]{Theorem \ref*{r00}} in a specific manner, in order to get the reverse implication of \hyperref[str1]{Proposition \ref*{str1}}. 

\begin{proposition}
\label{str2}
\hyperref[r2]{Theorem \ref*{r2}} implies \hyperref[r00]{Theorem \ref*{r00}}, when the latter one is equipped with the hypothesis that $f$ and $g$ are both continuous instead of being just locally Lebesgue integrable.  
\end{proposition}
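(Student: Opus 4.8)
The plan is to reverse the reduction of \hyperref[str1]{Proposition \ref*{str1}}: rather than differentiating $n$ times, I antidifferentiate $n$ times and hand the outcome to \hyperref[r2]{Theorem \ref*{r2}}. So, under the strengthened hypothesis of \hyperref[r00]{Theorem \ref*{r00}}, I set $F\coloneqq A_{n,f,c}$ and $G\coloneqq A_{n,g,c}$. Because $f$ and $g$ are now continuous, \eqref{cAnnf} makes $\left.F\right|_{I\cap\mathbb{R}}$ and $\left.G\right|_{I\cap\mathbb{R}}$ $n$-times differentiable with $F^{\left(n\right)}=f$ and $G^{\left(n\right)}=g$, so that $\frac{F^{\left(n\right)}}{G^{\left(n\right)}}=\frac{f}{g}$. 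Furthermore $F^{\left(k\right)}{\left(c\right)}=A_{n-k,f,c}{\left(c\right)}=0$ for all $k\in\left\{0,\dots,n-1\right\}$, and likewise for $G$, so both order-$\left(n-1\right)$ Taylor polynomials at $c$ vanish; hence $R_{n-1,F,c}=F=A_{n,f,c}$ and $R_{n-1,G,c}=G=A_{n,g,c}$, in agreement with \eqref{Rn1fAnfn}.

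With these identifications the assertion turns into a verbatim instance of \hyperref[r2]{Theorem \ref*{r2}} for the pair $F,G$: its hypothesis is that $\frac{F^{\left(n\right)}}{G^{\left(n\right)}}=\frac{f}{g}$ be (strictly) monotonic, and its conclusion is that $\frac{R_{n-1,F,c}}{R_{n-1,G,c}}=\frac{A_{n,f,c}}{A_{n,g,c}}$ be (strictly) monotonic of the same type, which is exactly what \hyperref[r00]{Theorem \ref*{r00}} demands. Thus the whole reduction is immediate \emph{save for one hypothesis}, and it is precisely here that continuity does its work, upgrading the local integrability of $f,g$ to the differentiability that lets one even write $F^{\left(n\right)},G^{\left(n\right)}$.

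The missing hypothesis is the pointwise one, $G^{\left(n\right)}{\left(x\right)}=g{\left(x\right)}\neq0$ for \emph{every} $x$, whereas we are only granted a constant nonzero sign Lebesgue-almost everywhere; this is the main obstacle, and it is genuine, since a continuous $g$ can vanish on its closed, nowhere dense, Lebesgue-null zero set $Z\coloneqq g^{-1}{\left(\left\{0\right\}\right)}$ (as for $g=\mathrm{id}^2$ or $g=\sin^2$), and on any subinterval meeting $Z$ \hyperref[r2]{Theorem \ref*{r2}} is inapplicable; worse, it cannot be salvaged by re-basing at an interior point, because moving the base point destroys the identity $R_{n-1,\cdot,c}=A_{n,\cdot,c}$ on which the whole reduction rests. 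The way I would remove it is to reduce to a strictly nonvanishing denominator by approximation. Writing $\varphi\coloneqq\frac{f}{g}$ for a (strictly) monotonic representative, for $\varepsilon>0$ I set $g_\varepsilon\coloneqq g+\varepsilon$, take $\varphi_\varepsilon$ to be a mollification of $\varphi$ (still continuous and monotonic of the same type, strictness included, since convolving a monotone function with a nonnegative kernel preserves its monotonicity), and put $f_\varepsilon\coloneqq\varphi_\varepsilon\,g_\varepsilon$. Then $f_\varepsilon,g_\varepsilon$ are continuous, $g_\varepsilon$ never vanishes, and $\frac{f_\varepsilon}{g_\varepsilon}=\varphi_\varepsilon$ is (strictly) monotonic of the prescribed type, so the clean reduction of the first two paragraphs applies to the pair $\left(f_\varepsilon,g_\varepsilon\right)$ and \hyperref[r2]{Theorem \ref*{r2}} yields that $h_\varepsilon\coloneqq\frac{A_{n,f_\varepsilon,c}}{A_{n,g_\varepsilon,c}}$ is (strictly) monotonic of that type on $I\setminus\left\{c\right\}$. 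Since $\varphi_\varepsilon g_\varepsilon\to\varphi g=f$ Lebesgue-almost everywhere and $g_\varepsilon\to g$ (dominatedly on each compact subinterval), dominated convergence in the Cauchy integral gives $A_{n,f_\varepsilon,c}\to A_{n,f,c}$ and $A_{n,g_\varepsilon,c}\to A_{n,g,c}$ pointwise, and as $A_{n,g,c}$ does not vanish on $I\setminus\left\{c\right\}$ by point 1 of \hyperref[prqst]{Proposition \ref*{prqst}}, we obtain $h_\varepsilon\to h\coloneqq\frac{A_{n,f,c}}{A_{n,g,c}}$ pointwise there. A pointwise limit of functions monotonic of a fixed type is monotonic of that same type, so $h$ is (at least non-strictly) monotonic of the desired type.

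I expect the real work to lie in this last limiting passage, and in particular in recovering \emph{strict} monotonicity, which a pointwise limit does not hand over automatically. I would close that gap directly: if $h$ were constant, equal to some $k$, on a nondegenerate subinterval $J\subseteq I\setminus\left\{c\right\}$, then $A_{n,f-kg,c}=A_{n,f,c}-k\,A_{n,g,c}$ would vanish identically on $J$, whence by \eqref{cAnnf} and the continuity of $f-kg$ its $n$-th derivative $f-kg$ would vanish on the interior of $J$; but then $\frac{f}{g}=k$ on the full-measure set $\left\{g\neq0\right\}\cap J$, contradicting the strict monotonicity of $\frac{f}{g}$. Hence in the strict case $h$ is nonconstant on every subinterval, which together with its non-strict monotonicity forces it to be strictly monotonic, completing the argument.
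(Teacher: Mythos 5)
Your first two paragraphs are exactly the paper's proof: by \eqref{cAnnf}, $\left.A_{n,f,c}\right|_{I\cap\mathbb{R}}$ and $\left.A_{n,g,c}\right|_{I\cap\mathbb{R}}$ are $n$-times differentiable with $n$-th derivatives $f$ and $g$, the order-$\left(n-1\right)$ Taylor polynomials at $c$ vanish (equivalently, one invokes \eqref{Rn1fAnfn}), and \hyperref[r2]{Theorem \ref*{r2}} applied to the pair $A_{n,f,c}$, $A_{n,g,c}$ yields the conclusion. The paper stops there, silently treating the sign hypothesis on $g$ as if it were pointwise; so your diagnosis of a mismatch is a legitimate reading of the statement, since continuity plus a Lebesgue-almost everywhere sign condition does not force $g{\left(x\right)}\neq 0$ for every $x$ (e.g. $g=\mathrm{id}^2$). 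Note, though, that the other half of the mismatch is costless: once $g$ is nonvanishing, $\frac{f}{g}$ is continuous, and a continuous function that agrees Lebesgue-almost everywhere with a (strictly) monotonic one is itself (strictly) monotonic; so under the pointwise reading your first two paragraphs already finish the job and everything after is unnecessary.

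Under the literal reading, however, your repair contains a genuine gap: the monotonic representative $\varphi$ of $\frac{f}{g}$ need not be locally Lebesgue integrable, so its mollification need not exist as a finite function. Concretely, take $I=\left[0,1\right]$, $c=0$, $f\equiv 1$, $g=\mathrm{id}$; every hypothesis of the weakened \hyperref[r00]{Theorem \ref*{r00}} holds, yet $\varphi=\frac{1}{\mathrm{id}}$ is not integrable near $0$ and admits no finite monotonic extension to the left of $0$, so $\varphi_\varepsilon\equiv+\infty$ on $\left[0,\varepsilon\right)$, $f_\varepsilon=\varphi_\varepsilon g_\varepsilon$ is not a real-valued continuous function, \hyperref[r2]{Theorem \ref*{r2}} cannot be applied to it, and $A_{n,f_\varepsilon,0}\equiv+\infty$, so the claimed dominated convergence is vacuous. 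The mechanism is general: $\varphi$, being monotonic and Lebesgue-almost everywhere finite, is locally bounded on $I^\circ$, but it may diverge at an endpoint of $I$ at which $g$ vanishes. Your scheme is fine whenever the integration intervals $\left[\min\left\{c,x\right\},\max\left\{c,x\right\}\right]$ stay away from such an endpoint (continuity of $\frac{A_{n,f,c}}{A_{n,g,c}}$ then propagates monotonicity to all of $I\setminus\left\{c\right\}$), but it collapses precisely when $c$ is such an endpoint, because every Cauchy integral $\int_c^x$ then crosses the blow-up. A repair would truncate $\varphi$ at a level $M$ before mollifying — losing strictness of $\varphi_\varepsilon$, which is harmless since your final paragraph recovers strictness correctly and independently — and would then need a genuine uniform-integrability argument coupling $M$, the mollification radius and $\varepsilon$, which does not come for free because mollification does not commute with multiplication by $g+\varepsilon$. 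In short: the step you flagged as delicate (strictness in the limit) is sound, but the construction of the approximants, which you treat as routine, is where the proof breaks.
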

\begin{proof}
Under the hypothesis of the weakened \hyperref[r00]{Theorem \ref*{r00}}, \eqref{cAnnf} implies that $\left.A_{n,f,c}\right|_{I\cap\mathbb{R}}$ and $\left.A_{n,g,c}\right|_{I\cap\mathbb{R}}$ are both $n$-times differentiable.  

Now, all we have to do is to apply \hyperref[r2]{Theorem \ref*{r2}} for the functions $A_{n,f,c}$ and $A_{n,g,c}$. 
\end{proof}

\subsection{Proof}
\label{proof} 

In view of \hyperref[str1]{Proposition \ref*{str1}}, we only need to prove the stronger of the main results, in particular, \hyperref[r00]{Theorem \ref*{r00}}.

\begin{proof}[Proof of Theorem 5.] 
It suffices to show the result only for the case where $g$ preserves Lebesgue-almost everywhere the positive sign. Indeed, we can employ such a result for $-f$ and $-g$ instead of $f$ and $g$, respectively, in order to get the corresponding one for $g$ that preserves Lebesgue-almost everywhere the negative sign. 

Moreover, it suffices to show \hyperref[r00]{Theorem \ref*{r00}} only for the case where $\frac{f}{g}$ is Lebesgue-almost everywhere (strictly) increasing. Indeed, we can employ such a result for $-f$ instead of $f$, in order to get the corresponding one for $\frac{f}{g}$ that is Lebesgue-almost everywhere (strictly) decreasing.

Hence, we assume, without loss of generality, that $g$ preserves Lebesgue-almost everywhere the positive sign and that $f$ is Lebesgue-almost everywhere (strictly) increasing.  

We will show the desired result by induction on $n$. 

\begin{enumerate}
\item \textit{The base case}. The case where $n=1$ is nothing but \hyperref[r0]{Theorem \ref*{r0}} itself. 

Since $g$ preserves Lebesgue-almost everywhere the positive sign, the function $A_{1,g,c}$ is strictly increasing, which implies that its inverse ${A_{1,g,c}}^{-1}\colon\,A_{1,g,c}{\left(I\right)}\to I$ is not only well defined but also strictly increasing. In addition, the continuity of $A_{1,g,c}$ guarantees that $A_{1,g,c}{\left(I\right)}$ is an interval. 

We then consider the function $h\coloneqq A_{1,f,c}\circ {A_{1,g,c}}^{-1}\colon\,A_{1,g,c}{\left(I\right)}\to \mathbb{R}$ and we claim that $$h=A_{1,\frac{f}{g}\circ{A_{1,g,c}}^{-1},0},$$ that is $$\int\limits_{c}^{{A_{1,g,c}}^{-1}{\left(\cdot\right)}}{f{\left(t\right)}\mathrm{d}t}=\int\limits_{0}^{\cdot}{\frac{f{\left({A_{1,g,c}}^{-1}{\left(t\right)}\right)}}{g{\left({A_{1,g,c}}^{-1}{\left(t\right)}\right)}}\mathrm{d}t}.$$ Indeed, observing that $$\int\limits_{c}^{{A_{1,g,c}}^{-1}{\left(\cdot\right)}}{f{\left(t\right)}\mathrm{d}t}=\int\limits_{c}^{{A_{1,g,c}}^{-1}{\left(\cdot\right)}}{\frac{f{\left(t\right)}}{g{\left(t\right)}}g{\left(t\right)}\mathrm{d}t}=\int\limits_{c}^{{A_{1,g,c}}^{-1}{\left(\cdot\right)}}{\frac{f{\left(t\right)}}{g{\left(t\right)}}{A_{1,g,c}}'{\left(t\right)}\mathrm{d}t},$$ we get the desired equality by the use of the change of variable formula (see, e.g., \cite[point (i) of Corollary 6.97 in page 326]{stromberg2015introduction}). 

Moreover, $\frac{f}{g}\circ{A_{1,g,c}}^{-1}\colon\,A_{1,g,c}{\left(I\right)}\to\left[-\infty,\infty\right]$ is Lebesgue-almost everywhere (strictly) increasing as a composition of a strictly increasing function and an Lebesgue-almost everywhere (strictly) increasing function.

The combination of the above two facts implies that $h$ is (strictly) convex (see, e.g., \cite[Theorem A in page 9 and Remark B in page 13]{roberts1973convex} or \cite[Theorem 14.14 in page 334]{yeh2014real}). Hence, from the equality $h{\left(0\right)}=0$ along with the Galvani lemma (see, e.g., \cite[Theorem 1.3.1 in page 20]{niculescu2006convex}) we deduce that the function $$\frac{h}{\mathrm{id}}\colon\,A_{1,g,c}{\left(I\right)}\setminus\left\{0\right\}\to\mathbb{R}$$ is (strictly) increasing and so is $$\frac{h\circ A_{1,g,c}}{A_{1,g,c}}\colon\,I\setminus\left\{c\right\}\to\mathbb{R},$$ since $A_{1,g,c}$ is strictly increasing. The result then follows from the fact that $h\circ A_{1,g,c}=A_{1,f,c}$. 
\item \textit{The induction step}. If $n\neq 1$, we then fix a natural number $k\in\left\{1,\dots,n-1\right\}$. In view of point 1. of \hyperref[prqst]{Proposition \ref*{prqst}}, both $\frac{A_{k,f,c}}{A_{k,g,c}},\frac{A_{k+1,f,c}}{A_{k+1,g,c}}\,\colon I\setminus\left\{c\right\}\to\mathbb{R}$ are well defined. 

We assume that $\frac{A_{k,f,c}}{A_{k,g,c}}$ is (strictly) increasing and we will show that $\frac{A_{k+1,f,c}}{A_{k+1,g,c}}$ is (strictly) increasing. 

We consider the functions $$\tilde{f}\coloneqq{\left(\mathrm{sgn}\circ\left(\mathrm{id}-c\right)\right)}^k A_{k,f,c}\colon\,I\to\mathbb{R}\text{ and }\tilde{g}\coloneqq{\left(\mathrm{sgn}\circ\left(\mathrm{id}-c\right)\right)}^k A_{k,g,c}\colon\,I\to\mathbb{R},$$ which are both locally Lebesgue integrable. 

We claim that $\tilde{g}$ preserves Lebesgue-almost everywhere the positive sign. Indeed, we have that $$\tilde{g}{\left(x\right)}=\frac{\mathrm{sgn}{\left(x-c\right)}}{\left(k-1\right)!}\int_{c}^{x}{g{\left(t\right)}{\left|x-t\right|}^{k-1}\mathrm{d}t},\text{ }\forall x\in I,$$ since $$\mathrm{sgn}{\left(x-c\right)}=\mathrm{sgn}{\left(x-t\right)},\text{ }\forall t\in \left(\min{\left\{c,x\right\}},\max{\left\{c,x\right\}}\right),\text{ }\forall x\in I\setminus\left\{c\right\},$$ therefore $\left.\tilde{g}\right|_{I\setminus\left\{c\right\}}$ preserves the positive sign. 

In addition, $\frac{\tilde{f}}{\tilde{g}}\colon\,I\setminus\left\{c\right\}\to\mathbb{R}$ is (strictly) increasing, since $$\frac{\tilde{f}}{\tilde{g}}=\frac{A_{k,f,c}}{A_{k,g,c}}.$$ 

With the above facts at hand, all we have to do is first to apply \hyperref[r0]{Theorem \ref*{r0}} for the functions $\tilde{f}$ and $\tilde{g}$ and second to employ \eqref{knk}, in order to obtain the desired result. 
\end{enumerate} 
\end{proof}

\section{Corollaries and examples}
\label{corollaries and examples} 

Below follow some applications of the generalized fraction rules for monotonicity. 

\begin{enumerate}
\item \textit{Monotonicity of high order mean}: We consider
\begin{enumerate}[label=\roman*.]
\item a natural number $n\in\mathbb{N}$, 
\item an interval $I\subseteq\mathbb{R}$, 
\item a point $c\in I$ and 
\item a locally Lebesgue integrable function $f\colon\,I\to\mathbb{R}$. 
\end{enumerate}
If $f$ is Lebesgue-almost everywhere (strictly) monotonic, then from \hyperref[r00]{Theorem \ref*{r00}} for $g\equiv 1$ we deduce that $M_{n,f,c}$ is (strictly) monotonic of the same (strict) monotonicity. 
\item \textit{Convexity of high order mean}: We consider
\begin{enumerate}[label=\roman*.]
\item a natural number $n\in\mathbb{N}$, 
\item an interval $I\subseteq\mathbb{R}$, 
\item a point $c\in I$ and 
\item a convex function $f\colon\,I\to\mathbb{R}$. 
\end{enumerate}
From the Galvani lemma we have that the function $\frac{f-f{\left(c\right)}}{\mathrm{id}-c}\colon\,I\setminus\left\{c\right\}\to\mathbb{R}$ is (strictly) increasing. Extending the above function as $$\frac{\left(f-f{\left(c\right)}\right)\mathrm{sgn}\circ\left(\mathrm{id}-c\right)}{\left|\mathrm{id}-c\right|}\colon\,I\to\left[-\infty,\infty\right]$$ and remembering that every convex function is locally Lebesgue integrable, we employ \hyperref[r00]{Theorem \ref*{r00}}, in order to obtain that the function $$\frac{A_{n,\left(f-f{\left(c\right)}\right)\mathrm{sgn}\circ\left(\mathrm{id}-c\right),c}}{\left(n+1\right)A_{n,\left|\mathrm{id}-c\right|,c}}=\frac{A_{n,f-f{\left(c\right)},c}}{\left(n+1\right)A_{n,\mathrm{id}-c,c}}=\frac{M_{n,f-f{\left(c\right)},c}}{\left(n+1\right)M_{n,\mathrm{id}-c,c}}=\frac{M_{n,f,c}-f{\left(c\right)}}{\mathrm{id}-c}\colon\,I\setminus\left\{c\right\}\to\mathbb{R}$$ is also (strictly) increasing. Hence, again from the Galvani lemma we deduce that $M_{n,f,c}$ is (strictly) convex.  
\item \textit{An application in ordinary differential equations}: We consider the classic nondimensionalized epidemiological model of the single epidemic outbreak for non negative times $t\in\left[0,\infty\right)$, 
\begin{align*}
S'{\left(t\right)}&=-\mathcal{R}_0 S{\left(t\right)} I{\left(t\right)}\\
I'{\left(t\right)}&=-I{\left(t\right)}+\mathcal{R}_0 S{\left(t\right)} I{\left(t\right)}\\
R'{\left(t\right)}&=I{\left(t\right)}, 
\end{align*}
where $\mathcal{R}_0>1$ and we search for $S,I,R\,\colon\left[0,\infty\right]\to\left[0,1\right]$, when the initial values $S{\left(0\right)}$, $I{\left(0\right)}$ and $R{\left(0\right)}$ are given. 

In the non trivial epidemiological situation of $S{\left(0\right)},I{\left(0\right)}\,\in\left(0,1\right)$ and $R{\left(0\right)}\in\left[0,1\right)$, there exists such functions satisfying the following properties,
\begin{enumerate}[label=\roman*.]
\item $S{\left(t\right)},I{\left(t\right)}\,\in\left(0,1\right)$, for every $t\in\left[0,\infty\right)$, with $$I{\left(t\right)}+S{\left(t\right)}-\frac{1}{\mathcal{R}_0}\ln{S{\left(t\right)}}=I{\left(0\right)}+S{\left(0\right)}-\frac{1}{\mathcal{R}_0}\ln{S{\left(0\right)}},\text{ }\forall t\in\left[0,\infty\right)$$ and 
\item $I{\left(\infty\right)}=0$ and $$S{\left(\infty\right)}=-\frac{1}{\mathcal{R}_0}W{\left(-\mathcal{R}_0S{\left(0\right)}\e^{-\mathcal{R}_0\left(S{\left(0\right)}+I{\left(0\right)}\right)}\right)}\in\left(0,\frac{1}{\mathcal{R}_0}\right),$$ where $W$ stands for the Lambert function (see, e.g., \cite{corless1996lambert}). 
\end{enumerate}

Hence, $S'{\left(t\right)}<0$, for all $t\in\left[0,\infty\right)$, which implies that $S$ is strictly decreasing. By the use of \hyperref[r1]{Theorem \ref*{r1}} we deduce that $\frac{I-I{\left(c\right)}}{S-S{\left(c\right)}}\colon\left[0,\infty\right]\setminus\left\{c\right\}\to\mathbb{R}$ is strictly increasing for every $c\in\left[0,\infty\right]$, since $$\frac{I'}{S'}=\frac{1}{\mathcal{R}_0 S}-1$$ is strictly increasing. Thus, $$\lim\limits_{t\to c}{\frac{I{\left(t\right)}-I{\left(c\right)}}{S{\left(t\right)}-S{\left(c\right)}}}\overset{\frac{0}{0}}{=}\lim\limits_{t\to c}{\frac{I'{\left(t\right)}}{S'{\left(t\right)}}}=\frac{1}{\mathcal{R}_0 S{\left(c\right)}}-1.$$ These facts imply that $$I{\left(t\right)}<I{\left(c\right)}+\left(\frac{1}{\mathcal{R}_0S{\left(c\right)}}-1\right)\left(S{\left(t\right)}-S{\left(c\right)}\right),\text{ }\forall t\in\left[0,\infty\right]\setminus\left\{c\right\},$$ i.e. a useful \textit{a priori} estimate when $c=0$.  

Moreover, by the use of \hyperref[r00]{Theorem \ref*{r00}} we deduce that $\frac{M_{n,I,c}-I{\left(c\right)}}{M_{n,S,c}-S{\left(c\right)}}\colon\left[0,\infty\right)\setminus\left\{c\right\}\to\mathbb{R}$ is strictly increasing for every $n\in\mathbb{N}$ and $c\in\left[0,\infty\right)$. The corresponding inequality is $$M_{n,I,c}{\left(t\right)}<I{\left(c\right)}+\left(\frac{1}{\mathcal{R}_0S{\left(c\right)}}-1\right)\left(M_{n,S,c}{\left(t\right)}-S{\left(c\right)}\right),\text{ }\forall t\in\left[0,\infty\right)\setminus\left\{c\right\}.$$ which can also be deduced directly from the previous one. 
\item \textit{Multidimensional analogue for specific radial functions}: We consider 
\begin{enumerate}[label=\roman*.]
\item a natural number $n\in\mathbb{N}$ and 
\item two functions $f,g\,\colon\,\left[0,\infty\right)\to\mathbb{R}$, such that 
\begin{enumerate}[label=\alph*.]
\item $f$ and $g$ are both locally Lebesgue integrable and 
\item $g$ preserves Lebesgue-almost everywhere a non zero sign.
\end{enumerate}
\end{enumerate}
We then set $$\begin{aligned}
\phi\colon\,\coprod\limits_{r\in\left(0,\infty\right)}{B{\left(0_n,r\right)}}&\to\mathbb{R}\\
\left(r,x\right)&\mapsto \phi{\left(r,x\right)}\coloneqq f{\left(r-\left|x\right|\right)}
\end{aligned}\text{ and }\begin{aligned}
\psi\colon\,\coprod\limits_{r\in\left(0,\infty\right)}{B{\left(0_n,r\right)}}&\to\mathbb{R}\\
\left(r,x\right)&\mapsto \psi{\left(r,x\right)}\coloneqq g{\left(r-\left|x\right|\right)},
\end{aligned}$$ where $B{\left(0_n,r\right)}$ stands for the $n$-dimensional ball of radius $r>0$ centered at the origin $0_n\in\mathbb{R}^n$ and $\left|\cdot\right|$ stands for the standard euclidean norm in $\mathbb{R}^n$. 

Employing the change of variables formula, we can deduce that, for every fixed $r>0$, the functions $\phi{\left(r,\cdot\right)},\psi{\left(r,\cdot\right)}\colon\,B{\left(0_n,r\right)}\to\mathbb{R}$ are both Lebesgue integrable. Indeed, we have  $$\int\limits_{B{\left(0_n,r\right)}}{\phi{\left(r,x\right)}\mathrm{d}x}=\frac{2\pi^{\frac{n}{2}}}{\Gamma{\left(\frac{n}{2}\right)}}\int\limits_{0}^{r}{f{\left(r-t\right)}t^{n-1}\mathrm{d}t}=\frac{2\pi^{\frac{n}{2}}\left(n-1\right)!}{\Gamma{\left(\frac{n}{2}\right)}} A_{n,f,0}{\left(r\right)},$$ where for the first equality we employed the polar coordinates change of variables formula for the radial functions (see, e.g., \cite[Theorem 26.20 in page 695]{yeh2014real}). Similarly follows the result for the other function, $\psi$, for which we also note that, in view of \hyperref[prqst]{Proposition \ref*{prqst}}, we have $$\int\limits_{B{\left(0_n,r\right)}}{\psi{\left(r,x\right)}\mathrm{d}x}\neq 0,\text{ }\forall r>0.$$ 

We now claim that if $\frac{f}{g}\colon\,\left[0,\infty\right)\to\left[-\infty,\infty\right]$ is Lebesgue-almost everywhere (strictly) monotonic, then the well defined function $\frac{\int\limits_{B{\left(0_n,\cdot\right)}}{\phi{\left(\cdot,x\right)}\mathrm{d}x}}{\int\limits_{B{\left(0_n,\cdot\right)}}{\psi{\left(\cdot,x\right)}\mathrm{d}x}}\colon\,\left(0,\infty\right)\to\mathbb{R}$ is (strictly) monotonic of the same (strict) monotonicity. Indeed, from \hyperref[r00]{Theorem \ref*{r00}} we have that $\frac{A_{n,f,0}}{A_{n,g,0}}\colon\,\left(0,\infty\right)\to\mathbb{R}^n$ is (strictly) monotonic of the same (strict) monotonicity as of $\frac{f}{g}$ and the result then follows since $$\frac{A_{n,f,0}}{A_{n,g,0}}=\frac{\int\limits_{B{\left(0_n,\cdot\right)}}{\phi{\left(\cdot,x\right)}\mathrm{d}x}}{\int\limits_{B{\left(0_n,\cdot\right)}}{\psi{\left(\cdot,x\right)}\mathrm{d}x}}.$$ 
\end{enumerate}

\begin{appendices}

\section{The L'H\^opital rule for monotonicity via differential calculus}
\label{the lhopital rule for monotonicity via differential calculus}

We need the following straightforward extension to unbounded intervals of a well known result (see, e.g., \cite[Theorem 8.3.3 in page 229]{choudary2014real}), the proof of which is omitted. 

\begin{theorem}[the Rolle theorem]
\label{rll}
Consider 
\begin{enumerate}
\item a compact interval $I\subseteq\left[-\infty,\infty\right]$ and
\item a function $f\colon I\to\mathbb{R}$, such that 
\begin{enumerate}[label=\roman*.]
\item $f$ is continuous and
\item $\left.f\right|_{I^\circ}$ is differentiable. 
\end{enumerate}
\end{enumerate}
If $f{\left(\partial I\right)}$ is a singleton, then there exists a point $\xi\in I^\circ$, such that $f'{\left(\xi\right)}=0$. 
\end{theorem}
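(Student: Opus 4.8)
The plan is to reduce the statement to the classical Rolle theorem for bounded intervals by transporting $f$ through an order-preserving homeomorphism of the extended real line onto a compact subinterval of $\mathbb{R}$. First I would fix such a homeomorphism; a convenient choice is
\[
\phi\colon\left[-\infty,\infty\right]\to\left[-1,1\right],\quad \phi{\left(x\right)}\coloneqq\frac{2}{\pi}\arctan{x}\ \text{ for }x\in\mathbb{R},\quad \phi{\left(\pm\infty\right)}\coloneqq\pm 1.
\]
This $\phi$ is continuous and strictly increasing, it restricts to a differentiable bijection of $\mathbb{R}$ onto $\left(-1,1\right)$ whose derivative $\phi'{\left(x\right)}=\frac{2}{\pi}\frac{1}{1+x^2}$ never vanishes, and consequently its inverse $\phi^{-1}$ is differentiable on $\left(-1,1\right)$ with nowhere vanishing derivative. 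Being an increasing homeomorphism, $\phi$ carries $I$ onto a compact \emph{bounded} interval $J\coloneqq\phi{\left(I\right)}=\left[\alpha,\beta\right]\subseteq\left[-1,1\right]$, sending the two endpoints of $I$ to $\alpha,\beta$ and the open interval $I^\circ$ bijectively onto $\left(\alpha,\beta\right)$; I may assume $I$ is nondegenerate, so that $\alpha<\beta$.

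Next I would introduce $F\coloneqq f\circ\phi^{-1}\colon J\to\mathbb{R}$ and check that it meets the hypotheses of the classical Rolle theorem on the bounded interval $J$. Continuity of $F$ on $\left[\alpha,\beta\right]$ is immediate from the continuity of $f$ and of $\phi^{-1}$. For $\eta\in\left(\alpha,\beta\right)$ the point $\xi\coloneqq\phi^{-1}{\left(\eta\right)}$ is a finite point of $I^\circ$, at which $f$ is differentiable by hypothesis and $\phi^{-1}$ is differentiable with $\left(\phi^{-1}\right)'{\left(\eta\right)}\neq 0$; hence the chain rule gives that $F$ is differentiable on $\left(\alpha,\beta\right)$ with
\[
F'{\left(\eta\right)}=f'{\left(\phi^{-1}{\left(\eta\right)}\right)}\left(\phi^{-1}\right)'{\left(\eta\right)}.
\]
Finally, since $\phi$ maps the endpoints of $I$ onto $\{\alpha,\beta\}$, the hypothesis that $f{\left(\partial I\right)}$ is a singleton translates exactly into $F{\left(\alpha\right)}=F{\left(\beta\right)}$, i.e. $F{\left(\partial J\right)}$ is a singleton.

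With these verifications in place, the classical Rolle theorem applied to $F$ on $\left[\alpha,\beta\right]$ yields a point $\eta\in\left(\alpha,\beta\right)$ with $F'{\left(\eta\right)}=0$. Because $\left(\phi^{-1}\right)'{\left(\eta\right)}\neq 0$, the displayed chain-rule identity forces $f'{\left(\phi^{-1}{\left(\eta\right)}\right)}=0$, so $\xi\coloneqq\phi^{-1}{\left(\eta\right)}\in I^\circ$ is the sought point. The only delicate point, and the one that genuinely uses the passage to $[-\infty,\infty]$, is the bookkeeping at the infinite endpoints: one must be sure that $\phi$ turns the interval endpoints and interior of a possibly unbounded $I$ into the honest endpoints and interior of the bounded $J$, so that the boundary condition transfers verbatim while every interior Rolle point of $F$ pulls back to a \emph{finite} point of $I$ at which the ordinary derivative $f'$ is meaningful and vanishes. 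Everything else is the standard bounded statement together with the inverse-function and chain rules.
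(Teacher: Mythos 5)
Your proof is correct: the scaled arctangent is an increasing homeomorphism of $\left[-\infty,\infty\right]$ onto $\left[-1,1\right]$ restricting to a diffeomorphism with nowhere-vanishing derivative between the finite parts, so continuity, the boundary condition, and interior differentiability all transfer to $F=f\circ\phi^{-1}$, and the nonvanishing of $\left(\phi^{-1}\right)'$ lets you pull the classical Rolle point back to a finite point of $I^\circ$. Note, however, that the paper explicitly omits its proof of this theorem (it is presented as a \textquote{straightforward extension}), so there is no argument in the text to compare yours against. It is worth recording the even more elementary route that the authors presumably had in mind, which avoids the change of variables entirely: since $I$ is compact in $\left[-\infty,\infty\right]$ and $f$ is continuous, $f$ attains a maximum and a minimum on $I$; if both are attained on $\partial I$, then, $f{\left(\partial I\right)}$ being a singleton, $f$ is constant, and $f'$ vanishes at every point of the nonempty interior $I^\circ$; otherwise some extremum is attained at a point $\xi\in I^\circ$, which is automatically a real number, and Fermat's theorem on interior extrema gives $f'{\left(\xi\right)}=0$. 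That argument uses only the extreme value theorem on a compact space plus Fermat, with no bookkeeping at infinity, whereas your compactification argument has the merit of reducing the statement verbatim to the bounded-interval theorem, a technique that also transports the mean value theorem and its relatives to $\left[-\infty,\infty\right]$ with no extra work. One pedantic remark applying to either argument: the statement implicitly requires $I$ to be nondegenerate (a singleton $I$ satisfies the hypotheses vacuously but has $I^\circ=\emptyset$), which you correctly flag by assuming $\alpha<\beta$.
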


We also need the following extension of point 2. of \hyperref[prqst]{Proposition \ref*{prqst}}. 

\begin{proposition}
\label{prqstd}
Consider
\begin{enumerate}
\item a natural number $n\in\mathbb{N}$, 
\item an interval $I\subseteq\left[-\infty,\infty\right]$ when $n=1$ or $I\subseteq\mathbb{R}$ when $n\neq 1$, 
\item a point $c\in I$ and 
\item a function $f\colon\,I\to\mathbb{R}$, such that 
\begin{enumerate}[label=\roman*.]
\item $\left.f\right|_{I\cap\mathbb{R}}$ is $n$-times differentiable and
\item $f^{\left(n\right)}{\left(x\right)}\neq 0$, for all $x\in I\cap\mathbb{R}$. 
\end{enumerate}
\end{enumerate}
Then $${\left({R_{n-1,f,c}}^{\left(k\right)}\right)}^{-1}{\left(\left\{0\right\}\right)}=\left\{c\right\},\text{ }\forall k\in\left\{0,\dots,n-1\right\}.$$ 
\end{proposition}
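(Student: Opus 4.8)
The plan is to reduce the statement to point 2 of \hyperref[prqst]{Proposition \ref*{prqst}}, which is precisely the case $k=0$, by relating the higher derivatives of a Taylor remainder to a Taylor remainder of lower order. The crucial observation is the identity
\begin{equation*}
{R_{n-1,f,c}}^{\left(k\right)}=R_{n-1-k,f^{\left(k\right)},c},\text{ }\forall k\in\left\{0,\dots,n-1\right\}.
\end{equation*}
First I would establish this by differentiating the defining expression $R_{n-1,f,c}=f-T_{n-1,f,c}$ term by term $k$ times. Since $T_{n-1,f,c}$ is a polynomial of degree at most $n-1$, its $k$-th derivative annihilates the lowest $k$ terms and, after re-indexing the summation by $i=j-k$, reproduces exactly $T_{n-1-k,f^{\left(k\right)},c}$; together with $f^{\left(k\right)}$ coming from differentiating $f$, this yields $R_{n-1-k,f^{\left(k\right)},c}$. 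The computation is legitimate because $\left.f\right|_{I\cap\mathbb{R}}$ is $n$-times differentiable, so $\left.f^{\left(k\right)}\right|_{I\cap\mathbb{R}}$ is $\left(n-k\right)$-times differentiable and every derivative appearing is well defined.

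With this identity in hand, the result follows by applying point 2 of \hyperref[prqst]{Proposition \ref*{prqst}} to the function $f^{\left(k\right)}$ in place of $f$ and the natural number $n-k$ in place of $n$. I would check the two hypotheses: that $\left.f^{\left(k\right)}\right|_{I\cap\mathbb{R}}$ is $\left(n-k\right)$-times differentiable (immediate from the $n$-times differentiability of $f$) and that ${\left(f^{\left(k\right)}\right)}^{\left(n-k\right)}{\left(x\right)}=f^{\left(n\right)}{\left(x\right)}\neq 0$ for all $x\in I\cap\mathbb{R}$ (which is exactly hypothesis ii). The interval condition also transfers without trouble: when $n\neq 1$ we have $I\subseteq\mathbb{R}$, which is admissible in \hyperref[prqst]{Proposition \ref*{prqst}} at every order $n-k$, while when $n=1$ the only admissible value is $k=0$ and the statement is literally point 2 of \hyperref[prqst]{Proposition \ref*{prqst}}. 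The proposition then gives ${R_{n-1-k,f^{\left(k\right)},c}}^{-1}{\left(\left\{0\right\}\right)}=\left\{c\right\}$, which by the identity above is exactly ${\left({R_{n-1,f,c}}^{\left(k\right)}\right)}^{-1}{\left(\left\{0\right\}\right)}=\left\{c\right\}$, as desired.

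There is no genuine obstacle here: the argument is essentially a bookkeeping reduction to the already established point 2 of \hyperref[prqst]{Proposition \ref*{prqst}}. The only step requiring care is the verification of the derivative identity, and in particular getting the re-indexing and the resulting remainder order $n-1-k$ exactly right, since an off-by-one there would misalign the order at which \hyperref[prqst]{Proposition \ref*{prqst}} is invoked.
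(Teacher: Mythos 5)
Your proof is correct: the identity ${R_{n-1,f,c}}^{\left(k\right)}=R_{n-1-k,f^{\left(k\right)},c}$ holds exactly as you derive it, and the reduction to point 2 of \hyperref[prqst]{Proposition \ref*{prqst}} applied to $f^{\left(k\right)}$ with order $n-k$ is sound, including your handling of the interval condition and the degenerate case $n=1$. However, it is a genuinely different route from the paper's. The paper proves the proposition by contradiction using the extended Rolle theorem (\hyperref[rll]{Theorem \ref*{rll}}): since ${R_{n-1,f,c}}^{\left(j\right)}{\left(c\right)}=0$ for all $j\in\left\{0,\dots,n-1\right\}$, a second zero of ${R_{n-1,f,c}}^{\left(k\right)}$ at some $x\neq c$ permits $n-k$ successive applications of Rolle, producing a point $\xi$ with $f^{\left(n\right)}{\left(\xi\right)}=0$, contradicting the hypothesis. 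The reason the paper takes this longer path is the stated purpose of the appendix: to develop the L'H\^opital rule for monotonicity using \emph{only} differential calculus. Point 2 of \hyperref[prqst]{Proposition \ref*{prqst}} was proved in the body of the paper via its point 1 and the integral form of the remainder \eqref{Rn1fAnfn}, i.e., via Lebesgue integration; by invoking it, your argument silently re-imports the integration-theoretic toolbox that the appendix is designed to avoid. So your proof is shorter and exposes a nice structural identity between derivatives of Taylor remainders and remainders of derivatives, but it cannot serve the paper's goal of a self-contained differential-calculus treatment, whereas the Rolle-based argument can.
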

\begin{proof}
To begin with, we have the equalities $${R_{n-1,f,c}}^{\left(k\right)}{\left(c\right)}=0,\text{ }\forall k\in\left\{0,\dots,n-1\right\}.$$ 

We assume that there exists a natural number $k\in \left\{0,\dots,n-1\right\}$ and a point $x\in I\setminus\left\{c\right\}$, such that ${R_{n-1,f,c}}^{\left(k\right)}{\left(x\right)}=0$. In view of the above sequence of equalities, we inductively apply \hyperref[rll]{Theorem \ref*{rll}} $n-k$ times, in order to deduce that there exists a point $\xi\in\left(\min{\left\{c,x\right\}},\max{\left\{c,x\right\}}\right)$, such that $f^{\left(n\right)}{\left(\xi\right)}=0$, which contradicts the assumption of the non vanishing $f^{\left(n\right)}$. 
\end{proof}

With \hyperref[prqstd]{Proposition \ref*{prqstd}} at hand, \hyperref[r2]{Theorem \ref*{r2}} is properly stated in the context of differential calculus. We now proceed to its proof. 

\begin{proof}[Proof of Theorem 2.] 
Since $g'{\left(x\right)}\neq 0$, for all $x\in I\cap\mathbb{R}$, from the Darboux theorem we have that $g'$ preserves a non zero sign, hence $g$ is strictly monotonous. Hence inverse function of $g$, $g^{-1}\colon\,g{\left(I\right)}\to I$ is well defined. Additionally, $g^{-1}$ is differentiable with $${\left(g^{-1}\right)}'=\frac{1}{g'\circ g^{-1}}.$$ 

Arguing as in the proof of \hyperref[r00]{Theorem \ref*{r00}}, it suffices to show the result for $g$ being strictly increasing and $f$ being (strictly) increasing. Therefore, we make such assumptions. From the strict monotonicity of $g$, the function $g^{-1}$ is also strictly increasing. 

Now, we consider the function $h\coloneqq f\circ g^{-1}\colon\,g{\left(I\right)}\to\mathbb{R}$, which is differentiable, due to the chain rule, with $$h'=f'\circ g^{-1}{\left(g^{-1}\right)}'=\frac{f'}{g'}\circ g^{-1},$$ thus $h'$ is (strictly) increasing as a composition of a strictly increasing function and a (strictly) increasing function. Hence $h$ is (strictly) convex. 

We then consider two arbitrary $x_1,x_2\,\in I\setminus\left\{c\right\}$, such that $x_1<x_2$. Since $g{\left(x_1\right)}<g{\left(x_2\right)}$, from the Galvani lemma we deduce that $$\frac{h{\left(g{\left(x_1\right)}\right)}-h{\left(g{\left(c\right)}\right)}}{g{\left(x_1\right)}-g{\left(c\right)}}\leq\frac{h{\left(g{\left(x_2\right)}\right)}-h{\left(g{\left(c\right)}\right)}}{g{\left(x_2\right)}-g{\left(c\right)}}\text{ }\left(\frac{h{\left(g{\left(x_1\right)}\right)}-h{\left(g{\left(c\right)}\right)}}{g{\left(x_1\right)}-g{\left(c\right)}}<\frac{h{\left(g{\left(x_2\right)}\right)}-h{\left(g{\left(c\right)}\right)}}{g{\left(x_2\right)}-g{\left(c\right)}}\right),$$ or else $$\frac{f{\left(x_1\right)}-f{\left(c\right)}}{g{\left(x_1\right)}-g{\left(c\right)}}\leq\frac{f{\left(x_2\right)}-f{\left(c\right)}}{g{\left(x_2\right)}-g{\left(c\right)}}\text{ }\left(\frac{f{\left(x_1\right)}-f{\left(c\right)}}{g{\left(x_1\right)}-g{\left(c\right)}}<\frac{f{\left(x_2\right)}-f{\left(c\right)}}{g{\left(x_2\right)}-g{\left(c\right)}}\right).$$
\end{proof}

\begin{proof}[Proof of Theorem 4.] 
It is only left to show the result for $n>1$ (with $I\subseteq\mathbb{R}$), thus we make such an assumption. 

To begin with, in view of \hyperref[prqstd]{Proposition \ref*{prqstd}} we have the following sequence of equalities $$\frac{{R_{n-1,f,c}}^{\left(k\right)}}{{R_{n-1,g,c}}^{\left(k\right)}}=\frac{{R_{n-1,f,c}}^{\left(k\right)}-{R_{n-1,f,c}}^{\left(k\right)}{\left(c\right)}}{{R_{n-1,g,c}}^{\left(k\right)}-{R_{n-1,g,c}}^{\left(k\right)}{\left(c\right)}},\text{ }\forall x\in I,\text{ }\forall k\in\left\{0,1,\dots,n-1\right\}.$$ 

Additionally, the following $$\frac{{R_{n-1,f,c}}^{\left(n\right)}}{{R_{n-1,g,c}}^{\left(n\right)}}=\frac{f^{\left(n\right)}}{g^{\left(n\right)}}$$ is true.  

Now, we inductively apply \hyperref[r1]{Theorem \ref*{r1}} $n$ times, in order to get that both $$\left.\frac{R_{n-1,f,c}}{R_{n-1,g,c}}\right|_{I\cap\left[-\infty,c\right)}\text{ and }\left.\frac{R_{n-1,f,c}}{R_{n-1,g,c}}\right|_{I\cap\left(c,\infty\right]}$$ are (strict) monotonic of the same (strict) monotonicity as of $\frac{f^{\left(n\right)}}{g^{\left(n\right)}}$. If $c\in \partial I$, then the proof is complete. 

Next, we deal with the case where $c\in I^\circ$. From the above (strict) monotonicity we deduce that the one sided limits to $c$ of these functions exist in $\left[-\infty,\infty\right]$, i.e. $$\lim\limits_{x\to c^-}{\frac{R_{n-1,f,c}{\left(x\right)}}{R_{n-1,g,c}{\left(x\right)}}}\in\left[-\infty,\infty\right]\ni\lim\limits_{x\to c^+}{\frac{R_{n-1,f,c}{\left(x\right)}}{R_{n-1,g,c}{\left(x\right)}}}.$$ Moreover, by the use of \textit{the Lagrange form of the remainder} (see, e.g., \cite[Theorem 8.4.1 in page 235]{choudary2014real}) we have that $$R_{n-1,f,c}{\left(x\right)}=\frac{f^{\left(n\right)}{\left(\xi_{f,x}\right)}}{n!}{\left(x-c\right)}^n,\text{ for some }\xi_{f,x}\in\left(\min{\left\{x,c\right\}},\max{\left\{x,c\right\}}\right),\text{ }\forall x\in I$$ and $$R_{n-1,g,c}{\left(x\right)}=\frac{g^{\left(n\right)}{\left(\xi_{g,x}\right)}}{n!}{\left(x-c\right)}^n,\text{ for some }\xi_{g,x}\in\left(\min{\left\{x,c\right\}},\max{\left\{x,c\right\}}\right),\text{ }\forall x\in I.$$ Therefore, $$\lim\limits_{x\to c^-}{\frac{R_{n-1,f,c}{\left(x\right)}}{R_{n-1,g,c}{\left(x\right)}}}=\lim\limits_{x\to c^-}{\frac{f^{\left(n\right)}{\left(\xi_{f,x}\right)}}{g^{\left(n\right)}{\left(\xi_{g,x}\right)}}}\in\mathbb{R}\ni\lim\limits_{x\to c^+}{\frac{f^{\left(n\right)}{\left(\xi_{f,x}\right)}}{g^{\left(n\right)}{\left(\xi_{g,x}\right)}}}=\lim\limits_{x\to c^+}{\frac{R_{n-1,f,c}{\left(x\right)}}{R_{n-1,g,c}{\left(x\right)}}},$$ since the function $\frac{f^{\left(n\right)}}{g^{\left(n\right)}}$ is (strictly) monotonous. By the same reason we deduce that $$\lim\limits_{x\to c^-}{\frac{R_{n-1,f,c}{\left(x\right)}}{R_{n-1,g,c}{\left(x\right)}}}\text{ }\left\{
\begin{aligned}
\leq\text{ }\left(<\right)\\
\geq\text{ }\left(>\right)
\end{aligned}\right\}\text{ }\lim\limits_{x\to c^+}{\frac{R_{n-1,f,c}{\left(x\right)}}{R_{n-1,g,c}{\left(x\right)}}}\text{ if }\frac{f^{\left(n\right)}}{g^{\left(n\right)}}\text{ is }\left\{
\begin{aligned}
\text{(stricty) increasing}\\
\text{(stricty) decreasing}
\end{aligned}\right\},$$ thus $$\frac{R_{n-1,f,c}{\left(x_1\right)}}{R_{n-1,g,c}{\left(x_1\right)}}\text{ }\left\{
\begin{aligned}
\leq\text{ }\left(<\right)\\
\geq\text{ }\left(>\right)
\end{aligned}\right\}\text{ }\frac{R_{n-1,f,c}{\left(x_2\right)}}{R_{n-1,g,c}{\left(x_2\right)}}\text{ if }\frac{f^{\left(n\right)}}{g^{\left(n\right)}}\text{ is }\left\{
\begin{aligned}
\text{(stricty) increasing}\\
\text{(stricty) decreasing}
\end{aligned}\right\},$$ for every $x_1,x_2\,\in I$, such that $x_1<c<x_2$, which completes the proof. 
\end{proof} 
\end{appendices}


\bibliographystyle{plain}
\bibliography{mybibfile}\label{bibliography}
\addcontentsline{toc}{chapter}{Bibliography}

\end{document}